\def\url@leostyle{%
	\@ifundefined{selectfont}{\def\UrlFont{\sf}}{\def\UrlFont{\scriptsize\ttfamily}}} \makeatother\urlstyle{leo}
\newtheorem{theorem}{Theorem}
\newtheorem{proposition}[theorem]{Proposition}
\newtheorem{lemma}[theorem]{Lemma}
\theoremstyle{definition}
\theoremstyle{remark}
\newtheorem{remark}[theorem]{Remark}
\numberwithin{equation}{section}
\numberwithin{theorem}{section}
\def\cN{\mathcal{N}}
\def\cQ{\mathcal{Q}}
\def\bE{\mathbb{E}}
\def\bN{\mathbb{N}}
\def\bP{\mathbb{P}}
\def\bR{\mathbb{R}}
\def\sF{\mathscr{F}}
\newcommand{\wt}{\widetilde}
\newcommand{\wh}{\widehat}
\newcommand{\set}[1]{\{#1\}}            
\DeclareMathOperator*{\w}{w-\!\!} 
\DeclareMathOperator{\dif}{d \!}        
\title{Parameter estimation for discretely sampled stochastic heat equation driven by space-only noise}
\author{
	Igor Cialenco\,\thanks{Department of Applied Mathematics, Illinois Institute of Technology
		\newline \hspace*{1.45em}  10 W 32nd Str, Building RE, Room 220, Chicago, IL 60616, USA
		\newline \hspace*{1.45em}  Emails: \url{cialenco@iit.edu}, URL: \url{http://cialenco.com}
		\vspace{0.5em}} ,
	\and
	Hyun-Jung Kim\,\thanks{Department of Mathematics, University of California Santa Barbara, 
		\newline \hspace*{1.45em} South Hall, Room 6607,
		University of California Santa Barbara, CA 93106, USA
		\newline \hspace*{1.45em} Email: \url{hjkim@ucsb.edu},
		URL: \url{https://sites.google.com/view/hyun-jungkim}
		\vspace{0.5em}}
}
\date{
{\small 
		First Circulated March 18, 2020 \\
		This Version: July 12, 2021 }}
\begin{document}
	
	\maketitle	
	
	\vspace{-2em}
	
	{\footnotesize
		\begin{tabular}{l@{} p{350pt}}
			\hline \\[-.2em]
			\textsc{Abstract}: \ & 

We derive consistent and asymptotically normal estimators for the drift and volatility parameters of the stochastic heat equation driven by an additive space-only white noise when the solution is sampled discretely in the physical domain. We consider both the full space and the bounded domain. We establish the exact spatial regularity of the solution, which in turn, using power-variation arguments, allows building the desired estimators. We show that naive approximations of the derivatives appearing in the power-variation based estimators may create nontrivial biases, which we compute explicitly. The proofs are rooted in Malliavin-Stein's method. 

\\[0.5em]
\textsc{Keywords:} \ &  SPDEs, parabolic Anderson model, quadratic variation, parameter estimation, 
discrete sampling, space-only noise, Malliavin calculus, Stein's method, central limit theorem. \\
\textsc{MSC2010:} \ &  Primary 60H15; Secondary 60H07, 62M99\\[1em]
\hline
\end{tabular}
	}

\section{Introduction}\label{sec:intro}
It is well recognized by now that stochastic partial differential equations (SPDEs) serve as a key modeling tool in various applied disciplines, and we refer the reader to 
the classical monographs \cite{RozovskiiBook,DaPratoZabczykBook2014,LototskyRozovsky2018Book}, and also to the textbooks \cite{Chow2007Book,Hairer2009Book,LototskyRozovsky2017Book}, for an in depth discussion of the theory of SPDEs and their numerous applications.  As with any stochastic model, statistical analysis of SPDEs driven models is of fundamental practical importance, and it has been investigated in numerous works; cf. the survey paper \cite{Cialenco2018}. The existing literature on statistical inference for SPDEs usually deals with space-time noise. In \cite{CialencoKimLototsky2019}, the authors make the first attempt to study inverse problems for stochastic evolution equations driven by \textit{space-only noise}, and the present paper contributes, in particular, to the efforts initiated therein. Specifically, we consider the one-dimensional stochastic heat equation driven by an additive space-only noise 
\begin{equation}\label{eq:1dPAM}
\begin{cases}
\frac{\partial u(t,x)}{\partial t}=\theta \frac{\partial^2 u(t,x)}{\partial x^2}+\sigma\dot W(x),
\quad t>0,\ x\in G \subset \mathbb{R},\\
u(0,x)=0,\quad x\in G,
\end{cases}
\end{equation}
where $\theta>0$, $\sigma\in \mathbb{R}$, and $\dot W(x)$ is a space-only Gaussian white noise on a complete probability space $(\Omega,\sF,\bP)$. That is, $\dot W$ is a Gaussian process on $G$ with  mean zero and covariance $\mathbb{E}\left[\dot W(x)\dot W(y)\right]=\delta(x-y)$, for every $x,y\in G$, and where $\delta$ is the Dirac-delta function.  The main goal of this paper is to estimate the drift coefficient $\theta$ or the diffusion (volatility) parameter $\sigma$, based on \textit{discrete observations} of the solution $u$.  

In \cite[Section~3]{CialencoKimLototsky2019}, statistical analysis of \eqref{eq:1dPAM} was performed within the so-called spectral approach, when the observations are  performed in the Fourier space and discretely in time. It was proved that the parameter $\theta$ can be determined exactly by knowing the value of just one Fourier coefficient of the solution $u$ at three time points. Finding the drift coefficient exactly without any statistical analysis is an unusual feature, and it is due to the singular nature of the statistical experiment at hand, which in turn in this case was due to the special space-only structure of the noise. Similar effect was noticed in \cite{CialencoLototsky2009} for equations driven by time-only noise. On the other hand, within the spectral approach to \eqref{eq:1dPAM}, the parameter $\sigma$ can only be estimated consistently by observing a larger number of Fourier coefficients of $u$, although only at two time points. While these are indeed remarkable properties, we emphasize that usually, the observer would measure the solution $u$ itself, rather than its Fourier coefficients. Moreover, to find or approximate even one Fourier coefficient of $u$ one has to use the values of $u$ on the whole space $G$ rather than some local space-time values; see \cite{CialencoDelgado-VencesKim2019} for more details on approximating the maximum likelihood estimators obtained by spectral approach. 

The parameter estimation problem for  SPDEs when the solution is discretely sampled in space and/or time component was  addressed systematically only recently by quite different methods; cf.  \cite{CialencoHuang2017,BibingerTrabs2017,BibingerTrabs2019,Chong2019,CialencoDelgado-VencesKim2019,Chong2019a,KainoMasayukiUchida2019,KhalilTudor2019} and references therein. We follow the $p$-variation approach of \cite{CialencoHuang2017}, which  exploits the optimal regularity of the solution combined with the power variation calculus to build consistent and asymptotically normal estimators of the unknown parameters; see also \cite{PospivsilTribe2007,ZiliZougar2019,KhalilTudor2019,CKP21}. 

The \textbf{main contributions} of this paper can be summarized as follows. \textbf{First}, in Section~\ref{sec:regularity}, we study the regularity properties of the solution $u$ to \eqref{eq:1dPAM} on the whole space $G=\bR$, and prove that $\partial_x u(t,x)=:u_x(t,x)$ is H\"older continuous, with exponent almost $1/4$ in $t$, and almost $1/2$ in $x$. The case of a bounded domain $G$ was investigated in \cite{KimLototsky2017}. \textbf{Second}, assuming that the derivative $u_x(t,x)$ is sampled at a uniform space grid $a=x_0<x_1<\cdots<x_N=b$, and $M$ time points, using the quadratic variation of $u_x$ in $x\in[a,b]$, we derive estimators for $\theta$ and $\sigma$. In Section~\ref{sec:discrtDerivU} we prove consistency  and asymptotic normality as the number of spatial points increases, $N\to\infty$.  The case $M=1$ and $G=(0,\pi)$ was discussed in  \cite{CialencoKimLototsky2019}. \textbf{Third}, and more importantly, in Section~\ref{sec:discrtSolU}, we show that naively substituting $u_x$ by its finite-difference approximations in the aforementioned estimators introduces a (nontrivial) bias. We compute explicitly this bias, and  prove that the corresponding estimators based on the discrete sampling of the solution $u$ in physical domain are also consistent and asymptotically normal as the spatial sample size increases. 
We argue that this methodology to construct consistent estimators can be used to study parameters estimation problems for SPDEs driven by colored noises, and we refer to the recent work \cite{CKP21} that treats semi-linear SPDEs. 
\textbf{Fourth}, we treat both, bounded and unbounded domains $G\in\bR$. Interestingly, the constructed estimators are the same for bounded and unbounded domains. We note that the spectral approach in principle cannot be applied to the unbounded domain case. 

The methods of proof of statistical properties of the proposed estimators are rooted in Malliavin calculus and Stein's method, combined with tight control of the covariance structures of the relevant spatial increments of the solution. The theoretical results are illustrated via numerical experiments in Section~\ref{sec:numerical}.  Some concluding remarks are made in Section~\ref{sec:future}. For reader's convenience, technical and lengthy proofs are deferred to appendix.

\section{Analytical properties of the solution}\label{sec:regularity}
In this section we will derive some fine regularity properties of the solution of \eqref{eq:1dPAM}. These properties, besides being of independent interest, will be also conveniently used in Section~\ref{sec:stats} to derive estimators for the unknown parameters $\theta$ and $\sigma$. 

We will consider two cases for the domain $G$: (a) the whole space, namely $G=\bR$; (b) the bounded domain $G=(0,\pi)$ for which we endow equation \eqref{eq:1dPAM} with Dirichlet boundary conditions $u(t,0)=u(t,\pi)=0, \ t >0$. Formally, when $G=\bR$, one can write $\dot W(x)=\displaystyle\frac{\partial W(x)}{\partial x}$, where $W$ is a two-sided Brownian motion.

We define a mild solution of \eqref{eq:1dPAM} as
\begin{equation}\label{eq:mild-sol}
u(t,x)
=\sigma\int_0^t\int_{G}\mathcal{P}(t-s,x,y)\dif W(y)\dif s,
\end{equation}
where $\mathcal{P}$ is the fundamental solution of the corresponding deterministic heat equation. That is
\begin{itemize}
\item[(a)] $\mathcal{P}(t,x,y):=P(t,x-y)=(4\pi \theta t)^{-1/2}e^{-|x-y|^2/4\theta t}$, if $G=\mathbb{R}$;
\item[(b)] $\mathcal{P}(t,x,y):=P^D(t,x,y)=\displaystyle\frac{2}{\pi}\sum_{k=1}^{\infty} e^{-k^2\theta t}\sin(kx)\sin(ky)$, if $G=(0,\pi)$ with Dirichlet boundary conditions.
\end{itemize}
One can show that $u$ is a well-defined  Gaussian field; we refer to \cite[Chapter 3]{Khoshnevisan2014BookSBMS} for $G=\mathbb{R}$,  and  \cite[Chapter 5]{KimLototsky2017} when $G=(0,\pi)$. As mentioned in  \cite{KimLototsky2017} all analytical properties of the solution $u$ remain true when $G=(0,\pi)$ with Neumann boundary conditions.

For $0<\alpha\leq 1$, we will denote by $\mathcal{C}_x^{\alpha-}(G)$ the space of H\"older continuous functions on $G$ in variable $x$ and  with any H\"older exponent $0<\beta<\alpha$. Similarly, by $\mathcal{C}_x^{(1+\alpha)-}(G)$ we denote the space of differentiable functions $f$ on $G$ such that $\partial_x f=:f_x \in \mathcal{C}_x^{\alpha-}(G)$. 
In what follows, for two sequences of numbers $\{a_n\}$ and $\{b_n\}$, we will write 
$a_n \sim b_n$, if there exists a number $0<c<\infty$ such that $\lim_{n\to\infty}a_n/b_n=c$. Respectively, $a_n\simeq b_n$, if $\lim_{n\to\infty}a_n/b_n=1$.

Let us focus on the case $G=\bR$. First we note that the solution $u$ is infinitely differentiable in $t>0$, and it is easy to show that 
$$
\frac{\partial^n u(t,x)}{\partial t^n}=\sigma \int_{\mathbb{R}} 
\frac{\partial^{n-1}}{\partial t^{n-1}} P(t,x-y)\dif W(y), \quad \bP-\textrm{a.s}, \ n\in\bN.
$$

The regularity properties of $u$ in the spatial component $x$ are more delicate. Towards this end,  we fix $a,b\in\bR$ such that $a>0$ and $b\geq 0$ (or $a\geq 0$ and $b>0$), and denote by $\Delta_h$, for some $h>0$, the difference quotient operator (acting on the spatial component) of the form 
\begin{equation}\label{eq:derApprox}
\Delta_h f(t,x):=\frac{f(t,x+ah)-f(t,x-bh)}{(a+b)h}, \quad t>0, \ x\in\bR,
\end{equation}
which should be viewed as an approximation of $f_x(t,x)$. For the sake of simplicity of writing we will suppress the dependence of $\Delta_h$ on $a,b$, while keeping in mind this dependence which will be important in the next section. 
We will also make use of the notation 
$$
v(t,x):=\sigma\int_0^t \int_{\mathbb{R}}P_x(s,x-y)\dif W(y)\dif s, \quad t>0, \ x\in\bR. 
$$
We start with a key technical result that will be used  throughout. 

\begin{lemma}\label{lemma:uhDuhproducs}
Let $t,t'>0$ and $x,y\in \mathbb{R}$. Then,
\begin{align}
\mathbb{E}\Delta_hu\left(t,x\right)\Delta_hu\left(t',y\right)
&=\frac{\sigma^2}{2\sqrt{\pi \theta}(a+b)^2h^2}
	\int_0^{t'}\int_0^t 
	\bigg(\frac{2e^{-\frac{(x-y)^2}{4\theta (s_1+s_2)}}}{\sqrt{s_1+s_2}} \nonumber\\
& \qquad	-\frac{e^{-\frac{(x-y+(a+b)h)^2}{4\theta (s_1+s_2)}}}{\sqrt{s_1+s_2}}-
	\frac{e^{-\frac{(x-y-(a+b)h)^2}{4\theta (s_1+s_2)}}}{\sqrt{s_1+s_2}}
	\bigg)\dif s_1\dif s_2,  \label{eq:A1}\\ 
\mathbb{E}\Delta_hu\left(t,x\right)v\left(t',x\right) 
& =\frac{\sigma^2}{4\sqrt{\pi}\theta^{3/2}(a+b)}
 \int_0^{t'}\int_0^t \bigg(\frac{ae^{-\frac{a^2h^2}{4\theta(s_1+s_2)}}}{(s_1+s_2)^{3/2}} 
 +\frac{be^{-\frac{b^2h^2}{4\theta(s_1+s_2)}}}{(s_1+s_2)^{3/2}} \bigg) \dif s_1\dif s_2, \label{eq:A2} \\ 
 \mathbb{E}v\left(t,x\right)v\left(t',y\right)& =
 \frac{\sigma^2}{4\sqrt{\pi}\theta^{3/2}}\int_0^{t'}\int_0^t\frac{e^{-\frac{(x-y)^2}{4\theta(s_1+s_2)}}}{(s_1+s_2)^{3/2}} \left(1-\frac{(x-y)^2}{2\theta(s_1+s_2)}\right)\dif s_1\dif s_2.\label{eq:A3}
\end{align}
\end{lemma}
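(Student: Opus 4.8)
The plan is to reduce each of the three second-moment identities to a deterministic double integral by means of the Wiener--It\^o isometry for the space-only white noise $\dot W$, and then to evaluate that integral using the semigroup (Chapman--Kolmogorov) property of the Gaussian heat kernel $P$ together with the explicit form of its first two spatial derivatives. Writing $P_x(s,z):=\partial_zP(s,z)$, from \eqref{eq:mild-sol} and the definition of $v$ one has the iterated representations
\begin{align*}
\Delta_hu(t,x)&=\frac{\sigma}{(a+b)h}\int_0^t\!\!\int_{\mathbb{R}}\big(P(s,x+ah-y)-P(s,x-bh-y)\big)\,dW(y)\,ds,\\
v(t,x)&=\sigma\int_0^t\!\!\int_{\mathbb{R}}P_x(s,x-y)\,dW(y)\,ds .
\end{align*}
For fixed $s>0$ the kernels $P(s,\cdot)$ and $P_x(s,\cdot)$ lie in $L^2(\mathbb{R})$, with norms of order $s^{-1/4}$ and $s^{-3/4}$ as $s\downarrow0$, so the inner Wiener integrals are genuine Gaussian variables and the outer $ds$-integrals converge absolutely in $L^1(\Omega)$. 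Denote any of the three left-hand sides generically by $\mathbb{E}[\,\cdot\,]$, with the two factors written $\int_0^t X_{s_1}\,ds_1$ and $\int_0^{t'}Y_{s_2}\,ds_2$ ($X,Y$ being the inner Wiener integrals). Fubini for the expectation is legitimate because $\mathbb{E}|X_{s_1}Y_{s_2}|\le\|X_{s_1}\|_{L^2(\Omega)}\|Y_{s_2}\|_{L^2(\Omega)}$ and this majorant factorizes and is integrable in $(s_1,s_2)$; applying it and then the isometry $\mathbb{E}\big[\big(\int f\,dW\big)\big(\int g\,dW\big)\big]=\int_{\mathbb{R}}fg$ in each time layer yields $\mathbb{E}[\,\cdot\,]=(\mathrm{const})\int_0^{t'}\!\int_0^t\big(\int_{\mathbb{R}}K_{s_1}(y)L_{s_2}(y)\,dy\big)\,ds_1\,ds_2$, where $K,L$ are suitable translates or spatial derivatives of $P$.

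The computational core is the convolution identity $\int_{\mathbb{R}}P(s_1,z_1-y)P(s_2,z_2-y)\,dy=P(s_1+s_2,z_1-z_2)$ (convolution of two centered Gaussians) together with its $z$-derivatives, valid by dominated convergence: $\int_{\mathbb{R}}P(s_1,z_1-y)P_x(s_2,z_2-y)\,dy=-P_x(s_1+s_2,z_1-z_2)$ and $\int_{\mathbb{R}}P_x(s_1,z_1-y)P_x(s_2,z_2-y)\,dy=-P_{xx}(s_1+s_2,z_1-z_2)$. Substituting these and using $P_x(s,z)=-\tfrac{z}{2\theta s}P(s,z)$, $P_{xx}(s,z)=\tfrac{P(s,z)}{2\theta s}\big(\tfrac{z^2}{2\theta s}-1\big)$, $P(s,z)=\tfrac{1}{2\sqrt{\pi\theta}\sqrt{s}}e^{-z^2/4\theta s}$, and the evenness $P(s,-z)=P(s,z)$ completes the bookkeeping. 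For \eqref{eq:A1} the four cross-terms of the two spatial differences have arguments $x-y$ (twice) and $x-y\pm(a+b)h$, so the spatial integral collapses to $2P(s_1+s_2,x-y)-P(s_1+s_2,x-y+(a+b)h)-P(s_1+s_2,x-y-(a+b)h)$ and the explicit $P$ gives the prefactor $\sigma^2/(2\sqrt{\pi\theta}(a+b)^2h^2)$. For \eqref{eq:A2} (both factors at the same $x$) the arguments are $ah$ and $-bh$, producing $\tfrac{ah}{2\theta(s_1+s_2)}P(s_1+s_2,ah)+\tfrac{bh}{2\theta(s_1+s_2)}P(s_1+s_2,bh)$, and one power of $h$ cancels the $1/h$ in the prefactor. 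For \eqref{eq:A3} the second-derivative identity gives $-P_{xx}(s_1+s_2,x-y)$ directly, and the explicit $P_{xx}$ supplies the factor $\big(1-\tfrac{(x-y)^2}{2\theta(s_1+s_2)}\big)$.

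The main obstacle is not any delicate cancellation but the sheer bookkeeping — setting up the Wiener integral against the space-only noise, tracking the several cross-terms with their translated arguments, differentiating the heat kernel and simplifying — which is exactly why this elementary-looking result is long and technical. The one thing that might look dangerous, the blow-up of $P_x(s,\cdot)$ in $L^2$ as $s\downarrow0$, is harmless here: the crude bounds $\|P(s,\cdot)\|_{L^2}\lesssim s^{-1/4}$ and $\|P_x(s,\cdot)\|_{L^2}\lesssim s^{-3/4}$ are integrable on $[0,t]$, so the majorant $\big(\int_0^t\|K_{s_1}\|\,ds_1\big)\big(\int_0^{t'}\|L_{s_2}\|\,ds_2\big)$ that controls every Fubini interchange is finite, and for $\Delta_hu$ the Gaussian factors $e^{-ch^2/(s_1+s_2)}$ give additional decay. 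With these bounds in hand, the evaluation described above is mechanical.
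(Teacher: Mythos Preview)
Your proposal is correct and follows essentially the same route as the paper: It\^o isometry reduces each expectation to a spatial convolution, and the semigroup property of $P$ evaluates it. The only cosmetic difference is that for \eqref{eq:A2}--\eqref{eq:A3} the paper computes the Gaussian integrals $\int_{\mathbb{R}}(\cdots)e^{-(\cdots)}dz$ by completing the square and reading off Gaussian moments, whereas you obtain the same result by differentiating the convolution identity $\int P(s_1,z_1-y)P(s_2,z_2-y)\,dy=P(s_1+s_2,z_1-z_2)$ under the integral; your packaging is slightly cleaner but the two are equivalent line-by-line. Your explicit integrability check $\|P(s,\cdot)\|_{L^2}\lesssim s^{-1/4}$, $\|P_x(s,\cdot)\|_{L^2}\lesssim s^{-3/4}$ to justify Fubini is a welcome addition that the paper leaves implicit.
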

\begin{proof}
The proof is deferred to Appendix~\ref{append:A}.
\end{proof}


\begin{theorem}\label{th:OptimalReg}
For each $t>0$, the function $u(t,\,\cdot\,)$ is $\mathbb{P}-$a.s. differentiable, and its derivative is given by 
\begin{equation}\label{eq:ux}
u_x(t,x) = \sigma\int_0^t \int_{\mathbb{R}}P_x(s,x-y)\dif W(y)\dif s.
\end{equation}
Moreover, $u_x(t,x)\in C^{1/4-,1/2-}_{t,x}([0,T]\times\bR)$. 
\end{theorem}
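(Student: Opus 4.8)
The statement has two parts — differentiability together with the formula \eqref{eq:ux}, and the joint Hölder regularity — and I would extract both from the covariance identities in Lemma~\ref{lemma:uhDuhproducs}. It is cleanest to first establish the Hölder regularity of the Gaussian field $v$ and then identify $v$ with $u_x$. Fix $T>0$ throughout; at $t=0$ everything is trivial since $u(0,\cdot)\equiv0$.

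\textbf{Step 1: Hölder regularity of $v$.} Since $v(t,x)=\sigma\int_0^t\int_{\bR}P_x(s,x-y)\dif W(y)\dif s$ is a centered Gaussian field, its increments satisfy $\mathbb{E}|v(t,x)-v(t',y)|^{2p}=c_p\big(\mathbb{E}|v(t,x)-v(t',y)|^2\big)^p$ for every $p\ge1$, so by the (anisotropic) Kolmogorov--Chentsov continuity criterion it suffices to show
\[
\mathbb{E}\big|v(t,x)-v(t',y)\big|^2\ \le\ C_T\big(|t-t'|^{1/2}+|x-y|\big),\qquad t,t'\in[0,T],\ x,y\in\bR,
\]
and to let $p\to\infty$. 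By the $L^2(\Omega)$ triangle inequality I treat pure space and pure time increments separately. For the spatial increment, \eqref{eq:A3} and the identity $\int_0^t\!\int_0^t f(s_1+s_2)\dif s_1\dif s_2=\int_0^{2t}f(r)\ell(r)\dif r$ with $\ell(r)=\min\{r,2t-r\}\le r$ give, using translation invariance of the covariance,
\[
\mathbb{E}\big|v(t,x)-v(t,y)\big|^2=C\int_0^{2t}\frac{\ell(r)}{r^{3/2}}\Big(1-e^{-z^2/4\theta r}\big(1-\tfrac{z^2}{2\theta r}\big)\Big)\dif r,\qquad z:=x-y;
\]
an elementary bound shows the bracket is nonnegative and $\le C_\theta\min\{1,z^2/r\}$, and then, crucially using $\ell(r)\le r$ to tame the $r^{-3/2}$ singularity and splitting the integral at the diffusive scale $r\sim z^2$, the right-hand side is $\le C_T|z|$. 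For the temporal increment, expanding the square in \eqref{eq:A3} at $x=y$ and cancelling the overlapping rectangles telescopes the difference into $C\iint_{[t\wedge t',\,t\vee t']^2}(s_1+s_2)^{-3/2}\dif s_1\dif s_2$, which is $\le C_T|t-t'|^{1/2}$: bound the integrand by $(2(t\wedge t'))^{-3/2}$ when $t\wedge t'\ge|t-t'|$, and enlarge the domain to $[0,2|t-t'|]^2$ (on which the integral is an explicit constant times $|t-t'|^{1/2}$) otherwise. This gives the displayed estimate, and Kolmogorov's theorem then provides a modification of $v$, still written $v$, lying in $C^{1/4-,1/2-}_{t,x}([0,T]\times\bR)$. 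The two displays above also yield the matching lower bounds $\mathbb{E}|v(t,x)-v(t,y)|^2\gtrsim|x-y|$ and $\mathbb{E}|v(0,x)-v(t',x)|^2\gtrsim|t'|^{1/2}$, so the exponents cannot be improved.

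\textbf{Step 2: $u(t,\cdot)$ is differentiable with $u_x=v$.} Fix $t>0$ and $x_0,x\in\bR$. Although $P_x(s,\cdot)$ is not square-integrable in $(s,y)$ jointly, one has $\|P_x(s,x-\cdot)\|_{L^2(\bR)}\le Cs^{-3/4}$, which is $\dif s$-integrable near $0$; hence the stochastic Fubini theorem (applied for each fixed $s>0$) together with ordinary Fubini in $(s,\xi)$ in $L^2(\Omega)$ gives, $\bP$-a.s.,
\[
\int_{x_0}^{x}v(t,\xi)\dif\xi=\sigma\int_0^t\!\!\int_{\bR}\Big(\int_{x_0}^{x}P_x(s,\xi-y)\dif\xi\Big)\dif W(y)\dif s=\sigma\int_0^t\!\!\int_{\bR}\big(P(s,x-y)-P(s,x_0-y)\big)\dif W(y)\dif s,
\]
and, after the substitution $s\mapsto t-s$ in \eqref{eq:mild-sol}, the last expression equals $u(t,x)-u(t,x_0)$. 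Both sides are $\bP$-a.s.\ continuous in $(t,x)$ — the left side by Step~1, the right side by a cruder version of the same Gaussian estimates applied to $u$ itself — so the identity $u(t,x)-u(t,x_0)=\int_{x_0}^x v(t,\xi)\dif\xi$ holds off a single $\bP$-null set for all $t\ge0$ and all $x_0,x\in\bR$; the fundamental theorem of calculus then yields that $u(t,\cdot)$ is differentiable with $u_x(t,\cdot)=v(t,\cdot)$, which is \eqref{eq:ux}. Combined with Step~1, $u_x\in C^{1/4-,1/2-}_{t,x}([0,T]\times\bR)$. As a by-product used in later sections, specializing \eqref{eq:A1}, \eqref{eq:A2}, \eqref{eq:A3} to $t'=t$, $y=x$ and using $0\le1-e^{-\tau}\le\min\{\tau,1\}$ with a split at $r\sim h^2$ shows $\mathbb{E}|\Delta_h u(t,x)-v(t,x)|^2\le C_t h^{1/2}\to0$, so the difference quotients $\Delta_h u$ also converge to $u_x$ in $L^2(\Omega)$.

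\textbf{Main obstacle.} There is no single dramatic difficulty, but the estimates must be handled with care in two places. First, the bounds must be \emph{uniform down to $t=0$}: one cannot freeze the heat kernel, and it is precisely the averaging $\int_0^t\dif s$ present in $v$ — which produces the weight $\ell(r)\le r$ absorbing the non-integrable singularity $r^{-3/2}$ — together with parabolic scaling that makes the integrals converge and delivers the exponents $1/4$ and $1/2$ rather than something worse; a bound discarding the factor $\ell(r)\le r$, or the cancelling term $1-\tfrac{z^2}{2\theta(s_1+s_2)}$ in \eqref{eq:A3}, does not reach the optimal exponents. Second, the identification $u_x=v$ cannot be obtained by naively differentiating under the stochastic integral, since $P_x$ is not in $L^2$ of the time--space domain; the stochastic-Fubini/FTC route above is designed to circumvent exactly this. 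Everything else — the Kolmogorov/Borel--Cantelli bookkeeping and the dominated-convergence step for the difference quotients — is routine.
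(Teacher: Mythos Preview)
Your proof is correct and takes a genuinely different route from the paper's.

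The paper proceeds in the opposite order: it first fixes $(t,x)$ and shows that the difference quotients converge, namely $\mathbb{E}|\Delta_h u(t,x)-v(t,x)|^2\le Ch$, by expanding this second moment as $A_1-2A_2+A_3$ via Lemma~\ref{lemma:uhDuhproducs} and then evaluating each piece exactly through the integration-by-parts identities of Lemma~\ref{lemma:intbyparts-1}; Kolmogorov's theorem (applied in the parameter $h$, using Gaussianity to boost moments) is then invoked to upgrade to a.s.\ convergence, which yields \eqref{eq:ux}. Only afterwards does the paper estimate $\mathbb{E}|u_x(t,x+h)-u_x(t,x)|^2$ and $\mathbb{E}|u_x(t,x)-u_x(s,x)|^2$ to obtain the H\"older regularity. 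Your argument reverses this: you first establish the H\"older regularity of the candidate field $v$ directly from \eqref{eq:A3}, and then identify $u_x$ with $v$ by a stochastic-Fubini/fundamental-theorem-of-calculus argument, $u(t,x)-u(t,x_0)=\int_{x_0}^x v(t,\xi)\dif\xi$, completely bypassing the pointwise convergence of difference quotients.

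What each approach buys: your FTC identification is conceptually cleaner and avoids the somewhat delicate step of turning an $L^2$ rate into an a.s.\ limit for $\Delta_h u$; it also makes transparent why differentiating under the stochastic integral is legitimate despite $P_x\notin L^2([0,t]\times\bR)$, which you correctly flag as the main obstacle. The paper's route, on the other hand, produces the sharp asymptotic $\lim_{h\to0}h^{-1}\mathbb{E}|\Delta_h u-v|^2=\frac{(a^2-ab+b^2)\sigma^2}{3(a+b)\theta^2}$ as a by-product of the explicit computation --- a finer statement than your bound $\le C_th^{1/2}$, though not one actually needed for the theorem. Your temporal estimate $\mathbb{E}|v(t,x)-v(t',x)|^2\le C_T|t-t'|^{1/2}$ is the correct global bound; the paper's displayed bound $\le C|t-s|$ is in fact too strong near $s=0$ (take $s=0$), but both lead to the same $1/4-$ H\"older conclusion after Gaussianity and Kolmogorov. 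Finally, you extract optimality of the exponents directly from lower bounds on the variances, whereas the paper defers this to Remark~\ref{rem:optimalReg} via the representation \eqref{eq:WR}.
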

\begin{proof} 
To prove \eqref{eq:ux}, we will show that, for any $t>0$ and $x\in\bR$,
\begin{equation}\label{eq:duEv}
\lim_{h\to 0} \left[\Delta_hu(t,x)-v(t,x)\right]=0, \quad \mathbb{P}-\textrm{a.s.}
\end{equation}	
From Lemma~\ref{lemma:uhDuhproducs}, we have
$\mathbb{E}\left|\Delta_hu(t,x)-v(t,x)\right|^2 = A_1+2A_2+A_3$, where $A_1,A_2$ and $A_3$ are the right hand sides of \eqref{eq:A1}, \eqref{eq:A2} and \eqref{eq:A3} with $t=t'$ and $x=y$ respectively. 
Applying Lemma~\ref{lemma:intbyparts-1} to $A_1$, $A_2$ and $A_3$, we get
\begin{align*}
A_1 = &\frac{4\sigma^2}{3\sqrt{\theta\pi}(a+b)^2h^2}\left((2t)^{3/2}\left(1-e^{-\frac{(a+b)^2h^2}{8\theta t}}\right)-2t^{3/2} \left(1-e^{-\frac{(a+b)^2h^2}{4\theta t}}\right)\right)\\
&-\frac{4\sigma^2}{3\sqrt{\pi}\theta^2}\left((2t)^{1/2}e^{-\frac{(a+b)^2h^2}{8\theta t}}+2t^{3/2} e^{-\frac{(a+b)^2h^2}{4\theta t}}\right)\\
&+\frac{2(a+b)h\sigma^2}{3\sqrt{\pi}\theta^2} \left[\int_{\frac{4\theta t}{(a+b)^2h^2}}^{\frac{8\theta t}{(a+b)^2h^2}} s^{-3/2}e^{-\frac{1}{s}}\dif s-\int_0^{\frac{4\theta t}{(a+b)^2h^2}} s^{-3/2}e^{-\frac{1}{s}}\dif s \right]\\
&+\frac{(a+b)h\sigma^2}{2\sqrt{\pi}\theta^2}\int_0^{\frac{4\theta t}{(a+b)^2h^2}}\int_0^{\frac{4\theta t}{(a+b)^2h^2}}(s_1+s_2)^{-5/2}e^{-\frac{1}{s_1+s_2}}\dif s_1\dif s_2,
\end{align*}
\begin{align*}
A_2= &\frac{\sigma^2 (2t)^{1/2}}{\sqrt{\pi}\theta^{3/2}(a+b)}\left(ae^{-\frac{a^2h^2}{8\theta t}}+be^{-\frac{b^2h^2}{8\theta t}}\right)-\frac{2\sigma^2 t^{1/2}}{\sqrt{\pi}\theta^{3/2}(a+b)} \left(ae^{-\frac{a^2h^2}{4\theta t}}+be^{-\frac{b^2h^2}{4\theta t}}\right)\\
&-\frac{a^2h\sigma^2}{2\sqrt{\pi}\theta^2(a+b)} \left[\int_{\frac{4\theta t}{a^2h^2}}^{\frac{8\theta t}{a^2h^2}} s^{-3/2}e^{-\frac{1}{s}}\dif s-\int_0^{\frac{4\theta t}{a^2h^2}}  s^{-3/2}e^{-\frac{1}{s}}\dif s \right]\\
&-\frac{b^2h\sigma^2}{2\sqrt{\pi}\theta^2(a+b)} \left[\int_{\frac{4\theta t}{b^2h^2}}^{\frac{8\theta t}{b^2h^2}} s^{-3/2}e^{-\frac{1}{s}}\dif s-\int_0^{\frac{4\theta t}{b^2h^2}} s^{-3/2}e^{-\frac{1}{s}}\dif s \right]\\
&-\frac{a^2h\sigma^2}{4\sqrt{\pi}\theta^2(a+b)}\int_0^{\frac{4\theta t}{a^2h^2}}\int_0^{\frac{4\theta t}{a^2h^2}} (s_1+s_2)^{-5/2}e^{-\frac{1}{s_1+s_2}}\dif s_1\dif s_2\\
&-\frac{b^2h\sigma^2}{4\sqrt{\pi}\theta^2(a+b)}\int_0^{\frac{4\theta t}{b^2h^2}}\int_0^{\frac{4\theta t}{b^2h^2}} (s_1+s_2)^{-5/2}e^{-\frac{1}{s_1+s_2}}\dif s_1\dif s_2,
\end{align*}
and
$$
A_3 = -\frac{\sigma^2}{\theta^{3/2}\sqrt{\pi}}(2t)^{1/2} + \frac{2\sigma^2}{\theta^{3/2}\sqrt{\pi}}t^{1/2}.
$$
Combining the above, and taking into account that 
$$
\int_0^{\infty} s^{-3/2}e^{-1/s}\dif s=\sqrt{\pi},   \quad
\int_0^{\infty}\int_0^{\infty} (s_1+s_2)^{-5/2}e^{-\frac{1}{s_1+s_2}}\dif s_1\dif s_2=\sqrt{\pi},
$$
we get at once that 
\[
\lim_{h\to 0} \frac{\mathbb{E}\left|\Delta_hu(t,x)-v(t,x)\right|^2}{h} =\frac{(a^2-ab+b^2)\sigma^2}{3(a+b)\theta^2}. 
\]
Note that $a^2-ab+b^2>0$, and thus, for small $h>0$, we have that 
$$
\mathbb{E}\left|\Delta_hu(t,x)-v(t,x)\right|^2\leq Ch,\quad \mbox{for some}\ C:=C(\theta,\sigma,a,b)>0.
$$
Hence, \eqref{eq:duEv} follows by Kolmogorov's continuity theorem, and thus \eqref{eq:ux} is proved.  

Using \eqref{eq:ux} and by applying Lemma~\ref{lemma:uhDuhproducs}, for $t,h>0$ and $x\in \mathbb{R}$, we have
\begin{align*}
\mathbb{E}\big|u_x(t,x+h) & -u_x(t,x)\big|^2 
 = 	\frac{\sigma^2}{2\theta^{3/2}\sqrt{\pi}}\int_0^t\int_0^t 
	\frac{1-e^{-\frac{h^2}{4\theta(s_1+s_2)}}}{(s_1+s_2)^{3/2}}\dif s_1\dif s_2 \\
& \qquad +\frac{\sigma^2h^2}{4\theta^{5/2}\sqrt{\pi}} \int_0^{t}\int_0^{t} \frac{e^{-\frac{h^2}{4\theta(s_1+s_2)}}}{(s_1+s_2)^{5/2}} 	\dif s_1\dif s_2\\
& \leq C_1h^{2\varepsilon} \int_0^t\int_0^t \frac{1}{(s_1+s_2)^{3/2+\varepsilon}} \dif s_1\dif s_2
+ C_2h\int_0^{t/h^2}\int_0^{t/h^2} \frac{e^{-\frac{1}{4\theta(s_1+s_2)}}}{(s_1+s_2)^{5/2}} 
\dif s_1\dif s_2 \\ 
& \leq C_3h^{2\varepsilon},
\end{align*}
for some constants $C_1,C_2,C_3>0$ and $0<\varepsilon<1/2$. 
By Kolmogorov's continuity theorem, we conclude that 	$u_x(t,\cdot)$ has a continuous modification that is  H\"older continuous with any exponent $\alpha<1/2$.

On the other hand, for each $x\in \mathbb{R}$, and $0\leq s\leq t$ such that $|t-s|\leq1$, we have
\begin{align*}
\mathbb{E}|u_x(t,x)-u_x(s,x)|^2 & =
	\frac{\sigma^2}{4\sqrt{\pi}\theta^{3/2}}\int_s^t\int_s^t (r_1+r_2)^{-3/2}\dif r_1 \dif r_2\nonumber\\
& = \frac{\sigma^2}{\sqrt{\pi}\theta^{3/2}} \left[-(2t)^{1/2}+(t+s)^{1/2}+(t+s)^{1/2}-(2s)^{1/2}\right] \\
& \leq C |t-s|^{1/2}. \label{eq:uxt-s}
\end{align*}
Again, in view of Kolmogorov's continuity theorem  $u_x(\cdot, x)$ has a continuous modification that is H\"older continuous with exponent $\alpha<1/4$ on $[0,T]$, for any $T>0$.
The proof is complete. 
\end{proof}

The case of bounded domain was studied in \cite{KimLototsky2017}, where the authors showed that Theorem~\ref{th:OptimalReg} holds true for $G=(0,\pi)$ and Dirichlet or Neumann boundary conditions. 
As we will show in the next section, the space regularity of $u_x(t,x)$ is optimal - a fact critically used in deriving the estimators for $\theta$ and $\sigma$.

\section{Statistical analysis}\label{sec:stats}

In this section we will derive consistent and asymptotically normal estimators for the parameters $\theta$ and $\sigma$, assuming discrete sampling of the solution in spatial component (or spatial and time components). Unless otherwise mentioned, we take $G=\bR$. 

Consider the uniform partition $\{x_i:=A+(B-A)i/N,\ i=0,...,N\}$ of the interval  $[A,B]\subset\bR$. We recall that the quadratic variation\footnote{For convenience, we consider the quadratic variation only with respect to uniform partitions.} of process $X$ on $[A,B]$, is defined as 
$$
V(X;[A,B]):=\lim_{N\to \infty} \sum_{i=1}^N |X(x_i)-X(x_{i-1})|^2, \quad \bP-\mbox{a.s.}
$$

\subsection{Discrete observations of $u_x$}\label{sec:discrtDerivU}

In view of Theorem~\ref{th:OptimalReg},  $u_x(t,\cdot)\in \mathcal{C}_x^{1/2-}(\mathbb{R})$, for any $t>0$, and thus one would expect that the quadratic variation of $u_x(t,\cdot)$ on any finite interval $[A,B]\subset\bR$, is finite ($\bP$-a.s. or in probability). Indeed, using \eqref{eq:ux}, applying integration by parts combined with some simple properties of the heat kernel, we deduce 
\begin{align*}
u_x(t,x) & = -\sigma \int_0^t \int_{\mathbb{R}}P_{xy}(t-s,x-y)W(y)\dif y\dif s 
=\sigma \int_0^t \int_{\mathbb{R}} P_{xx}(t-s,x-y)W(y)\dif y\dif s \\
& =\frac{\sigma}{\theta}\int_0^t\int_{\mathbb{R}}P_{t}(t-s,x-y)W(y)\dif y\dif s
=-\frac{\sigma}{\theta} \int_0^t \int_{\mathbb{R}}P_{s}(t-s,x-y)W(y)\dif y\dif s\\ & =-\frac{\sigma}{\theta}\int_{\mathbb{R}}\int_0^t P_{s}(t-s,x-y)\dif s W(y)\dif y
=-\frac{\sigma}{\theta}\int_{\mathbb{R}}\lim_{t'\to t}\int_0^{t'}P_{s}(t-s,x-y)W(y)\dif s\dif y,
\end{align*}
which consequently implies that 
\begin{align}
u_x(t,x)&=-\frac{\sigma}{\theta}\int_{\mathbb{R}} \lim_{t'\to t} \left(P\left(t-t',x-y\right) W(y)\right)\dif y + \frac{\sigma}{\theta}\int_{\mathbb{R}}P(t,x-y)W(y)\dif y \\
& =-\frac{\sigma}{\theta}W(x)+\frac{\sigma}{\theta}\int_{\mathbb{R}}P(t,x-y)W(y)\dif y. \label{eq:WR}
\end{align}
Note that $\int_{\mathbb{R}} P(t,x-y)W(y)\dif y$ is  infinitely differentiable 
in $x$, and thus in view of \cite[Proposition~2.1]{CialencoHuang2017}, we have, for any fixed $t>0$,
\begin{equation}\label{eq:QV-ux}
V\left(u_x(t,\cdot);[A,B]\right)=\frac{\sigma^2}{\theta^2}(B-A), \quad \bP-\mbox{a.s.}
\end{equation}
for any $A,B\in\bR, \ A<B$. 

\begin{remark}\label{rem:optimalReg}
Note that the representation \eqref{eq:WR} implies that the H\"older continuity $3/2-$ of $u(t,\cdot)$ given by Theorem~\ref{th:OptimalReg} is optimal. 
\end{remark}

As a direct consequence of \eqref{eq:QV-ux}, similarly to the method proposed in \cite{CialencoHuang2017}, for any fixed $t>0$, we consider the following estimators for $\theta^2$, assuming $\sigma$ is known, and correspondingly, for $\sigma^2$ assuming that $\theta$ is known,
\begin{align*}
\bar \theta^2_N  := \frac{\sigma^2(B-A)}{\sum_{i=1}^N \left(u_x(t,x_i)-u_x(t,x_{i-1})\right)^2}, \qquad 
\bar \sigma^2_N  :=\frac{\theta^2 \sum_{i=1}^N \left(u_x(t,x_i)-u_x(t,x_{i-1})\right)^2}{B-A}. 
\end{align*}
Consistency and asymptotic normality, as $N\to\infty$, are readily available due to \cite[Example~2.1]{CialencoHuang2017}. By extension, assuming that $u_x(t,x_i)$ is also sampled at some discrete time points, say $0<t_1<\cdots < t_M=T$, for some fixed $T>0$ and $M\in\bN$, we consider the estimators 
\begin{align}
\wh \theta^2_{N,M}&:= \frac{\sigma^2(B-A)M} {\sum_{j=1}^M\sum_{i=1}^N \left(u_x(t_j,x_i)-u_x(t_j,x_{i-1})\right)^2},  \label{eq:t-estimator-theta}\\
\wh \sigma^2_{N,M}&:=\frac{\theta^2 \sum_{j=1}^M\sum_{i=1}^N \left(u_x(t_j,x_i)-u_x(t_j,x_{i-1})\right)^2}{(B-A)M}, \label{eq:t-estimator-sigma}
\end{align}
which can be viewed as `time-average' counterparts of $\bar\theta_{N}^2$, and $\bar\sigma_{N}^2$.  Strong consistency of $\wh\theta_{N,M}^2$ and $\wh \sigma_{N,M}^2$, as $N\to\infty$, follow trivially from the strong consistency of $\bar\theta_{N}^2$ and $\bar \sigma_{N}^2$. On the other hand, the proof of asymptotic normality is more delicate and technically involved. 
We use elements of Malliavin calculus and Stein's method to prove asymptotic normality of $\wh\theta_{N,M}^2$ and $\wh \sigma_{N,M}^2$, as $N\to\infty$, which is performed over the course of several technical results listed below, while most of the proofs are postponed to Appendix~\ref{append:ProofsStats}.

In what follows, we will denote by $\Upsilon_{N,M}$ the space-time sampling grid, namely 
\begin{align*}
\Upsilon_{N,M}= \Big\{ (t_k,x_i) \mid & 0<t_1<\cdots<t_M=T, \  A=x_0<x_1<\cdots<x_N=B, \\ 
	& \quad x_{j+1} - x_j = x_1-x_0, \ j=1,\ldots,N-1 \Big\},
\end{align*}
for some fixed $T>0$, $A,B\in\bR$. In addition, we set $\tau:= \sigma^2/(\theta^{3/2}\sqrt{\pi})$.

\begin{proposition}\label{prop:uxasymp}
For every $t>0$, $x\in \mathbb{R}$ and $\alpha<1$, we have
$$
\lim_{N\to \infty}N^{\alpha}\left[N\mathbb{E} \left|u_x\left(t,x+\frac{B-A}{N}\right)-u_x\left(t,x\right)\right|^2
-\frac{\sigma^2(B-A)}{\theta^{2}}\right]=0.
$$
In particular, $$
\lim_{N\to \infty}N\mathbb{E} \left|u_x\left(t,x+\frac{B-A}{N}\right)-u_x\left(t,x\right)\right|^2
=\frac{\sigma^2(B-A)}{\theta^{2}}.
$$
\end{proposition}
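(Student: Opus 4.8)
The plan is to exploit the decomposition \eqref{eq:WR}. Write $h:=(B-A)/N$ and set $g(t,x):=\tfrac{\sigma}{\theta}\int_{\mathbb R}P(t,x-y)W(y)\,\dif y$, so that $u_x(t,x)=-\tfrac{\sigma}{\theta}W(x)+g(t,x)$ with $g(t,\,\cdot\,)$ being $\bP$-a.s.\ infinitely differentiable. Expanding the square and taking expectations,
\begin{align*}
\mathbb E\big|u_x(t,x+h)-u_x(t,x)\big|^2
&=\frac{\sigma^2}{\theta^2}\,\mathbb E\big(W(x+h)-W(x)\big)^2
-\frac{2\sigma}{\theta}\,\mathbb E\big[(W(x+h)-W(x))(g(t,x+h)-g(t,x))\big]\\
&\qquad +\,\mathbb E\big(g(t,x+h)-g(t,x)\big)^2 .
\end{align*}
The first term equals $\tfrac{\sigma^2}{\theta^2}h=\tfrac{\sigma^2(B-A)}{\theta^2}\cdot\tfrac1N$, which is precisely the claimed limiting value once multiplied by $N$. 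Hence it suffices to show that the second and third terms are $O(h^2)=O(N^{-2})$; for then, for any $\alpha<1$,
\[
N^{\alpha}\Big[N\,\mathbb E|u_x(t,x+h)-u_x(t,x)|^2-\tfrac{\sigma^2(B-A)}{\theta^2}\Big]=O\big(N^{\alpha-1}\big)\to 0,\qquad N\to\infty .
\]

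\textbf{The smooth term.} Integrating by parts in $y$ and using the Gaussian decay of $P(t,\cdot)$ and its derivatives against the sublinear growth of $W$, one gets $g_x(t,x)=\tfrac{\sigma}{\theta}\int_{\mathbb R}P(t,x-y)\,\dif W(y)$, hence, by the Wiener isometry, $\mathbb E\,g_x(t,z)^2=\tfrac{\sigma^2}{\theta^2}\int_{\mathbb R}P(t,z')^2\,\dif z'=\tfrac{\sigma^2}{\theta^2}(8\pi\theta t)^{-1/2}$, independently of $z$. Writing $g(t,x+h)-g(t,x)=\int_0^h g_x(t,x+r)\,\dif r$ and applying the Cauchy--Schwarz inequality, $\mathbb E\big(g(t,x+h)-g(t,x)\big)^2\le h\int_0^h\mathbb E\,g_x(t,x+r)^2\,\dif r=\tfrac{\sigma^2}{\theta^2}(8\pi\theta t)^{-1/2}h^2$.

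\textbf{The cross term.} I would represent both increments as Wiener integrals against $\dif W$: $W(x+h)-W(x)=\int_{\mathbb R}\1_{[x,x+h]}(y)\,\dif W(y)$ and, by the same integration by parts together with stochastic Fubini, $g(t,x+h)-g(t,x)=\tfrac{\sigma}{\theta}\int_{\mathbb R}\big(\int_0^h P(t,x+r-y)\,\dif r\big)\dif W(y)$. The Wiener isometry then gives
\[
\mathbb E\big[(W(x+h)-W(x))(g(t,x+h)-g(t,x))\big]=\frac{\sigma}{\theta}\int_x^{x+h}\!\int_0^h P(t,x+r-y)\,\dif r\,\dif y ,
\]
and since $0\le P(t,\cdot)\le(4\pi\theta t)^{-1/2}$ the right-hand side is bounded in absolute value by $\tfrac{|\sigma|}{\theta}(4\pi\theta t)^{-1/2}h^2$. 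Together with the previous step this gives the required $O(h^2)$ bounds and completes the argument.

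\textbf{Alternative route and main obstacle.} One can instead bypass \eqref{eq:WR} and work directly from the exact formula for $\mathbb E|u_x(t,x+h)-u_x(t,x)|^2$ obtained in the proof of Theorem~\ref{th:OptimalReg} (the sum of the two double integrals with kernels $(s_1+s_2)^{-3/2}$ and $h^2(s_1+s_2)^{-5/2}$), rescaling $s_i\mapsto \tfrac{h^2}{4\theta}s_i$, splitting the integration domain at the diagonal scale $s_1+s_2\sim h^2$, and using $\int_0^\infty s^{-3/2}e^{-1/s}\,\dif s=\sqrt\pi$; the leading order reproduces $\tfrac{\sigma^2}{\theta^2}h$, and one must then check the next-order term is $O(h^2)$. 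The main technical nuisance in this second route is the bookkeeping of the near-diagonal behavior of these integrals — in particular ruling out, or harmlessly absorbing, a possible $h^2|\log h|$ contribution — which is why I would prefer the representation-based argument, whose only real content is the elementary Wiener-isometry computation of the cross term.
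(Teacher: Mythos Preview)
Your argument is correct and takes a genuinely different route from the paper. The paper works directly from the covariance formula \eqref{eq:A3} of Lemma~\ref{lemma:uhDuhproducs} to express $\mathbb E|u_x(t,x+h)-u_x(t,x)|^2$ as a sum of two explicit double integrals, then applies the integration-by-parts identities of Lemma~\ref{lemma:intbyparts-1} to reduce these to closed-form expressions involving $e^{-c/(N^2 t)}$ and incomplete $\Gamma$-type integrals, and finally Taylor-expands to see that $NB_1-\sigma^2/\theta^2\sim N^{-1}$. In other words, the paper follows exactly what you label the ``alternative route,'' and the potential $h^2|\log h|$ obstruction you worry about is disposed of there by the explicit evaluation via Lemma~\ref{lemma:intbyparts-1} rather than by a soft estimate.

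Your approach instead exploits the structural decomposition \eqref{eq:WR}: the Brownian part $-\tfrac{\sigma}{\theta}W$ furnishes the leading term $\tfrac{\sigma^2}{\theta^2}h$ exactly, and the smooth remainder $g$ contributes only $O(h^2)$ via two short Wiener-isometry bounds. This is more transparent and avoids all the bookkeeping of Lemma~\ref{lemma:intbyparts-1}; it also makes it immediately clear \emph{why} the limit is $\sigma^2(B-A)/\theta^2$, namely because $u_x$ is a $C^\infty$ perturbation of $-\tfrac{\sigma}{\theta}W$. The trade-off is that the paper's computational route is not wasted effort: the same machinery (Lemmas~\ref{lemma:uhDuhproducs} and~\ref{lemma:intbyparts-1}) is reused to control the off-diagonal covariances $\Phi_N(i,t_\ell,t_k)$ in the CLT, where a decomposition like \eqref{eq:WR} is less directly helpful.
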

\begin{proof} 
The proof is deferred to Appendix~\ref{append:ProofsStats}.
\end{proof}

We next investigate the limit distribution of the centered and averaged spatial quadratic variation of $u_x$ defined as 
\begin{equation*}
Q_{N,M}:=\frac{1}{M}\sum_{j=1}^M\sum_{i=1}^N \left[\frac{\left(u_x(t_j,x_i)-u_x(t_j,x_{i-1})\right)^2}
{\mathbb{E}\left(u_x(t_j,x_i)-u_x(t_j,x_{i-1})\right)^2}-1\right]=
\frac{1}{M}\sum_{j=1}^M\sum_{i=1}^N \left[\frac{U(t_j,x_i)}{\mathbb{E}U(t_j,x_i)}-1\right],
\end{equation*}
where $U(t,x_i):=\left(u_x(t,x_i)-u_x(t,x_{i-1})\right)^2$.
In view of Proposition~\ref{prop:uxasymp}, $\mathbb{E} [U(t,x_i)]$ is independent of $i$, and for simplicity we will write $\mathbb{E}U(t,x_i)=\mathbb{E} U(t)$.

Note that for any $j=1,\dots, M$ and $k=1,\dots N$, 
\begin{align}\label{eq:innprod-tjxk}
u_x(t_j,x_k)-u_x(t_j,x_{k-1})=\int_{x_{k-1}}^{x_k} \dif u_x(t_j,\cdot)=\int_{\mathbb{R}}\int_0^{\infty}1_{[x_{k-1},x_k]}(y)\delta(t-t_j)\dif u_x(t,y)
\end{align}
is centered and stationary Gaussian. 

Let us denote by $\mathcal{H}$ the canonical Hilbert space associated to the Gaussian sequence with unit variance $\left\{\displaystyle\frac{u_x(t_j,x_k)-u_x(t_j,x_{k-1})}{\left(\mathbb{E}U(t_j)\right)^{1/2}},\ j=1,\dots, M,\ k=1,\dots,N\right\}_{M,N\in \mathbb{N}}$ \cite[Proposition 7.2.3]{NourdinPeccati2012}. 
By the product formula \cite[Theorem 2.7.7]{NourdinPeccati2012} or \cite[Proposition 1.1.3]{Nualart2006} for multiple stochastic integrals, we rewrite $Q_{N,M}$ as
\begin{equation}\label{eq:QN-multi}
Q_{N,M}=\frac{1}{M}\sum_{j=1}^M \sum_{i=1}^N I_2^{t_j,t_j}\left(1_{[x_{i-1},x_i]}^{\otimes 2}\right),
\end{equation}
where $I_n^{t_1,\dots,t_n}$ is the $n-$th multiple stochastic integral with $n\leq M,N$:
\begin{align*}
I_n^{t_1,\dots,t_n}(f)&:=\int_{\mathbb{R}^n}\int_{(0,\infty)^n} f(y_1,\dots,y_n)\delta(s_1-t_1)\cdots \delta(s_n-t_n) \dif u_x(s_1,y_1)\cdots \dif u_x(s_n,x_n)\\
&=\int_{\mathbb{R}^n}f(y_1,\dots,y_n) \dif u_x(t_1,y_1)\cdots \dif u_x(t_n,y_n),
\end{align*}
for a symmetric function $f$ and $\otimes$ denoting the tensor product.
Accordingly, using the isometry for  multiple integrals, we obtain 
\begin{align*}
\mathbb{E}Q_{N,M}^2 & =\frac{1}{M^2}\sum_{k,\ell =1}^M
\sum_{i,j=1}^N \mathbb{E} \left[I_2^{t_k,t_k}\left(1_{[x_{i-1},x_i]}^{\otimes 2}\right)
I_2^{t_{\ell},t_{\ell}}\left(1_{[x_{j-1},x_j]}^{\otimes 2}\right)\right]  \\
& =\frac{2}{M^2}\sum_{k,\ell =1}^M
\frac{1}{\mathbb{E} U(t_k)\mathbb{E} U(t_{\ell})} \sum_{i,j=1}^N \big[\mathbb{E}\left(u_x(t_k,x_i)-u_x(t_k,x_{i-1})\right)\left(u_x(t_{\ell},x_j)-u_x(t_{\ell},x_{j-1})\right)\big]^2 \\  
&=\frac{2}{M^2}\sum_{k=1}^M\left(\mathbb{E} U(t_k)\right)^{-2} 
\sum_{i=1}^N \big[\mathbb{E}\left(u_x(t_k,x_i)-u_x(t_k,x_{i-1})\right)\left(u_x(t_{\ell},x_i)-u_x(t_{\ell},x_{i-1})\right)\big]^2 \\
&+\frac{4}{M^2}\sum_{\ell>k}
	\frac{1}{\mathbb{E} U(t_k)\mathbb{E} U(t_{\ell})} \sum_{i=1}^N \big[\mathbb{E}\left(u_x(t_k,x_i)-u_x(t_k,x_{i-1})\right)\left(u_x(t_{\ell},x_i)-u_x(t_{\ell},x_{i-1})\right)\big]^2\\
&+\frac{4}{M^2}\sum_{k=1}^M\left(\mathbb{E} U(t_k)\right)^{-2} \sum_{i>j} \big[\mathbb{E}\left(u_x(t_k,x_i)-u_x(t_k,x_{i-1})\right)\left(u_x(t_{k},x_j)-u_x(t_{k},x_{j-1})\right)\big]^2 \\ 
& +
\frac{8}{M^2}\sum_{\ell>k}\big(\mathbb{E} U(t_k)\mathbb{E}  U(t_{\ell})\big)^{-1} 
\sum_{i>j} \big[\mathbb{E}\left(u_x(t_k,x_i)-u_x(t_k,x_{i-1})\right)\left(u_x(t_{\ell},x_j)-u_x(t_{\ell},x_{j-1})\right)\big]^2 \\
&=:Q_D +Q_{1}+Q_{2}+Q_{ND}.
\end{align*}
By construction, clearly the `diagonal term' $Q_D$ is equal to $2N/M$. 
For $Q_{1}$, we see that 
	\begin{align}\label{eq:limQC1}
	\lim_{N\to \infty} \frac{1}{N}Q_{C1}=2-\frac{2}{M}.
	\end{align}
This follows from the fact that
$$
\big[\mathbb{E}\left(u_x(t_k,x_i)-u_x(t_k,x_{i-1})\right)\left(u_x(t_{\ell},x_i)-u_x(t_{\ell},x_{i-1})\right)\big]^2\simeq \mathbb{E} U(t_k)\mathbb{E} U(t_{\ell})\quad \mbox{as $N\to \infty$}.
$$
To deal with the second cross term $Q_{2}$ and the non-diagonal term $Q_{ND}$, we compute explicitly (in Appendix~\ref{append:ProofsStats}) the expectations in inner sums, that consequently lead to the following representations
\begin{align}
Q_{2} & =\frac{4}{M^2}\sum_{k=1}^M\left(\mathbb{E} U(t_k)\right)^{-2}
\sum_{i=1}^{N-1} (N-i)\Phi_N^2(i,t_{k},t_k),
\label{eq:QC2-mainText}\\
Q_{ND} &=\frac{8}{M^2}\sum_{\ell>k}\big(\mathbb{E} U(t_k)\mathbb{E} U(t_{\ell})\big)^{-1} 
\sum_{i=1}^{N-1} (N-i)\Phi_N^2(i,t_{\ell},t_k),\label{eq:QND-mainText}
\end{align}
where the function $\Phi$ is defined in \eqref{eq:PhiN}.

\begin{proposition}\label{QNDAsym}
The following limits hold true:
\begin{align}
&\lim_{N\to \infty}\frac{1}{N}Q_{2}=0,   \label{eq:limQC2}\\ 
&\lim_{N\to \infty}\frac{1}{N}Q_{ND}=0,   \label{eq:limQND}\\ 
&\lim_{N\to \infty}\frac{1}{N}\mathbb{E}Q_{N,M}^2 =2. \label{eq:limEQ2NM}
\end{align}
\end{proposition}
\begin{proof}
We note that for any $x,c,c_1,c_2>0$, we have 
$$
\varphi_{c}(0,x) = \partial_z\varphi_{c}(0,x) =\phi_{(c_1,c_2)}(0,x) =\partial_z\phi_{(c_1,c_2)}(0,x) =0,
$$
where $\phi_c$ and $\phi_{(c_1,c_2)}$ are defined in \eqref{eq:phic} and \eqref{eq:phic1c2}.
Moreover, for $x\geq z> 0$, small $z$ and $c_1>c_2$, there exists a constant $C>0$, such that
\begin{align}\label{phi-taylor}
\left|\varphi_{c}(z,x)\right|&\leq  Cz^2\left(c^2(x+z)^2+c\right)e^{-c(x-z)^2}, \nonumber\\
\left|\phi_{(c_1,c_2)}(z,x)\right|& \leq\frac{C}{\sqrt{c_2}}z^2\left(\frac{1}{c_2}(x+z)^2+1\right)e^{-\frac{(x-z)^2}{c_1}},
\end{align}
which can be obtained, for example, by considering Taylor expansion of $\varphi_c$ and $\phi_{(c_1,c_2)}$ in $z$ with the remainder of order two.
Therefore, for each $i=1,2,\dots,N-1$ and $k,\ell=1,2,\dots,M$, we get
\begin{equation}\label{Phi-taylor-1}
\left|\Phi_N(i,t_{\ell},t_k)\right|\leq \frac{C}{\sqrt{t_1}N^{2}}
\left(\frac{(i+1)^2}{t_1N^{2}}+1\right).
\end{equation}
Hence, by \eqref{eq:QC2-mainText} and \eqref{eq:QND-mainText}, we get that 
$Q_{2}/N\leq C/N\to 0$ and $Q_{ND}/N\leq C/N\to 0$, as $N\to \infty$, and thus \eqref{eq:limQC2} and \eqref{eq:limQND} are proved. Using them, and recalling that $\bE Q^2_{N,M}=Q_D +Q_{1}+Q_{2}+Q_{ND}$, we get \eqref{eq:limEQ2NM}. This completes the proof. 
\end{proof}

For convenience, we define the normalized version of $Q_{N,M}$ as 
$$
\mathcal{Q}_{N,M}:=\sqrt{\frac{1}{2N}}Q_{N,M}.
$$
Now, we are ready to study the asymptotic normality of $\mathcal{Q}_{N,M}$ by studying the total variation distance between $\mathcal{Q}_{N,M}$ and a standard Gaussian random variable. 

\begin{theorem}\label{th:CLT}
For each $N,M\geq 1$, there exists a positive constant $C$ such that
$$
d_{TV}\left(\mathcal{Q}_{N,M}, \mathcal{N}(0,1)\right)\leq \frac{C}{N},
$$
where $d_{TV}$ denotes the total variation distance and $\cN(0,1)$ is a standard Gaussian random variable. 
\end{theorem}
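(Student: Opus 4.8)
The plan is to apply the quantitative fourth-moment / Malliavin–Stein machinery for elements of a fixed Wiener chaos. Since $\cQ_{N,M}$ is a finite linear combination of double stochastic integrals (see \eqref{eq:QN-multi}), it lives in the second Wiener chaos, up to the normalization $\sqrt{M/2N}$ applied to $Q_{N,M}$. The Nualart–Peccati / Nourdin–Peccati bound (e.g. \cite[Proposition 1.1.3]{Nualart2006} combined with the Stein-method estimates in \cite{Nualart2006} and the standard references) gives, for a centered element $F = I_2(f)$ with unit variance, a bound of the form $d_{TV}(F,\cN(0,1)) \le C\,\bigl(\|f\otimes_1 f\|^2\bigr)^{1/2}$, or equivalently a bound in terms of $|\bE[F^4] - 3|$ and $|\bE[F^2]-1|$. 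So first I would write $\cQ_{N,M}$ explicitly as a single double integral $I_2(g_{N,M})$ with $g_{N,M} = \sqrt{M/2N}\,\tfrac1M\sum_{j=1}^M (\bE U(t_j))^{-1}\sum_{i=1}^N 1_{[x_{i-1},x_i]}^{\otimes 2}$, observe from Proposition~\ref{QNDAsym} that $\bE[\cQ_{N,M}^2] = \tfrac{M}{2N}\bE[Q_{N,M}^2] \to 1$, and in fact track the rate: the diagonal term contributes exactly $1$ and $Q_{ND}/N = O(1/N)$ gives $|\bE[\cQ_{N,M}^2] - 1| \le CM/N$ from the estimate \eqref{Phi-taylor-1}.

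The core of the argument is then to bound the contraction norm $\|g_{N,M} \otimes_1 g_{N,M}\|_{\cH^{\otimes 2}}$. Expanding, $\|g_{N,M}\otimes_1 g_{N,M}\|^2$ is a sum over four time indices $k,\ell,k',\ell'$ and four space indices of products of four covariance-type inner products $\langle 1_{[x_{i-1},x_i]}, 1_{[x_{j-1},x_j]}\rangle_\cH$ evaluated at various time pairs — i.e., products of four increment covariances of $u_x$. The diagonal contributions (where spatial indices coincide in pairs) are of order $N \cdot (1/N^2)^2 \cdot$(normalization)$= O(1/N)$ after multiplying by the $(M/2N)(1/M^2)$ prefactor and summing the $M^2$ time-pair terms, so roughly $O(M/N)$; actually one should be slightly careful and the off-diagonal spatial terms need to be controlled using the decay estimate \eqref{phi-taylor}–\eqref{Phi-taylor-1}, exactly as in the proof of Proposition~\ref{QNDAsym}. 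Since each cross-time increment covariance $\Phi_N$ satisfies $|\Phi_N(i,t_\ell,t_k)| \le \tfrac{C}{\sqrt{t_1}N^2}\bigl(\tfrac{(i+1)^2}{t_1 N^2}+1\bigr)$, the sums telescope/converge and one obtains $\|g_{N,M}\otimes_1 g_{N,M}\| \le CM/N$. Feeding this, together with $|\bE[\cQ_{N,M}^2]-1| \le CM/N$, into the Malliavin–Stein bound yields $d_{TV}(\cQ_{N,M},\cN(0,1)) \le CM/N$.

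I expect the main obstacle to be the bookkeeping in the contraction-norm estimate: one must carefully separate the "same-time" terms (where the covariance of $u_x(t,\cdot)$ increments on adjacent intervals behaves like $\sigma^2(B-A)/(\theta^2 N)$ on the diagonal by Proposition~\ref{prop:uxasymp}, and is much smaller off-diagonal) from the "cross-time" terms governed by $\Phi_N$, and verify that in every combination the four-fold product, summed against the $O(N^2)$ pairs of spatial indices and $O(M^2)$ pairs of time indices and multiplied by the prefactor $(M/2N)(1/M^2)(\prod \bE U(t_j))^{-\cdots}$, is $O(M^2/N^2)$, whose square root is $O(M/N)$. The key inputs are all already available: the explicit covariances from Lemma~\ref{lemma:uhDuhproducs}, the sharp asymptotics of Proposition~\ref{prop:uxasymp} (so that $\bE U(t_j)^{-1} \sim \theta^2 N/(\sigma^2(B-A))$ uniformly for $t_j \ge t_1 > 0$), and the pointwise decay bound \eqref{Phi-taylor-1}. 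Once these are in hand, the estimate is a (lengthy but routine) summation, which I would relegate to the appendix.
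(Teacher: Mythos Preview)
Your proposal is correct and essentially the same as the paper's argument: the paper computes the Malliavin derivative $\mathcal{D}\mathcal{Q}_{N,M}$, sets $\mathcal{R}_{N,M}:=\|\mathcal{D}\mathcal{Q}_{N,M}\|_\cH^2-\bE\|\mathcal{D}\mathcal{Q}_{N,M}\|_\cH^2$, and bounds $d_{TV}$ by $C(\bE\mathcal{R}_{N,M}^2)^{1/2}$ via Theorem~\ref{MSthm}; expanding $\bE\mathcal{R}_{N,M}^2$ gives exactly your four-fold sum of products of increment covariances, which is then controlled by the pointwise bound \eqref{Phi-taylor-1} to obtain $\bE\mathcal{R}_{N,M}^2\le C M^2/N^2$. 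Your contraction-norm formulation $\|g_{N,M}\otimes_1 g_{N,M}\|$ is, for second-chaos elements, identical up to constants to $\mathrm{Var}\bigl(\tfrac12\|\mathcal{D}\mathcal{Q}_{N,M}\|_\cH^2\bigr)$, so the two routes coincide; your explicit tracking of $|\bE\mathcal{Q}_{N,M}^2-1|$ is a welcome addition, since Theorem~\ref{MSthm} as stated compares to $\mathcal{N}(0,\bE\mathcal{Q}_{N,M}^2)$ rather than $\mathcal{N}(0,1)$.
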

\begin{proof}
The detailed proof is deferred to Appendix~\ref{append:ProofsStats}.	
\end{proof}

Finally, we present the main result of this section. 

\begin{theorem}\label{th:est-ux-t} 
Assume that $u_x(t,x)$ is observed at the grid points $\Upsilon_{N,M}$. 
Then:
\begin{enumerate}[(i)]
\item given that $\sigma$ is known,\footnote{$\w\lim$ stands for convergence in distribution of random variables. }
\begin{align}
\lim_{N\to \infty} \wh\theta^2_{N,M} &=\theta^2, \quad \bP-\textrm{a.s.},\label{eq:tildeThetaConsist} \\
\w\lim_{N\to \infty} \sqrt{N}\left(\wh\theta^2_{N,M}-\theta^2\right)&=
\mathcal{N}\left(0,2\theta^4\right). \label{eq:tildeThetaAsNor}
\end{align}

\item similarly, given that $\theta$ is known,
\begin{align}
\lim_{N\to \infty} \wh\sigma^2_{N,M} & =\sigma^2, \quad \bP-\textrm{a.s.},\label{eq:tildeSigmaConsist} \\
\w\lim_{N\to \infty} \sqrt{N}\left(\wh\sigma^2_{N,M}-\sigma^2\right) & = 
\mathcal{N}\left(0,2\sigma^4\right). \label{eq:tildeSigmaAnNor}
\end{align}
\end{enumerate}
\end{theorem}
\begin{proof}
Strong consistency \eqref{eq:tildeThetaConsist} and \eqref{eq:tildeSigmaConsist} are already proved. 
Next we will prove \eqref{eq:tildeSigmaAnNor} - asymptotic normality of $\wh\sigma^2_{N,M}$. 
Aiming to connect $\cQ_{N,M}$ with $\wh\sigma_{N,M}^2$, we rewrite $\mathcal{Q}_{N,M}$ as 
\begin{align*}
\mathcal{Q}_{N,M} &=\frac{1}{M\sqrt{2N}}\sum_{j=1}^M\sum_{i=1}^N \left[\frac{\left(u_x(t_j,x_i)-u_x(t_j,x_{i-1})\right)^2}
	{\mathbb{E}\left(u_x(t_j,x_i)-u_x(t_j,x_{i-1})\right)^2}-1\right]\\
&=\frac{1}{M\sqrt{2}}\sum_{j=1}^M\Bigg[\frac{\left(\sum_{i=1}^N 
		\left(u_x(t_j,x_i)-u_x(t_j,x_{i-1})\right)^2\right)-\frac{\sigma^2(B-A)}{\theta^2}}{\sqrt{N}\mathbb{E}\left(u_x(t_j,x_1)-u_x(t_j,x_{0})\right)^2}\\
	&\qquad \qquad \qquad \qquad \qquad \qquad \qquad +\frac{\frac{\sigma^2(B-A)}{\theta^2}-N\mathbb{E}\left(u_x(t_j,x_i)-u_x(t_j,x_{i-1})\right)^2}{\sqrt{N}\mathbb{E}\left(u_x(t_j,x_1)-u_x(t_j,x_0)\right)^2}\Bigg].
\end{align*}
Then, from Proposition \ref{prop:uxasymp}, we have
\begin{align*}
\mathcal{Q}_{N,M}&\overset{a.s.}\simeq \frac{\sqrt{N}\theta^2}{M\sqrt{2}\sigma^2(B-A)}\left[\sum_{j=1}^M\sum_{i=1}^N 
	\left(u_x(t_j,x_i)-u_x(t_j,x_{i-1})\right)^2-\frac{M\sigma^2(B-A)}{\theta^2}\right]\\
	&+ \frac{\sqrt{N}\theta^2}{M\sqrt{2}\sigma^2(B-A)}\sum_{j=1}^M
	\left[\frac{\sigma^2(B-A)}{\theta^2}-N\mathbb{E}\left(u_x(t_j,x_i)-u_x(t_j,x_{i-1})\right)^2
	\right],\quad \mbox{as}\ N\to \infty.
\end{align*}
Equivalently, as $N\to \infty$,
\begin{align*}
\mathcal{Q}_{N,M}&\overset{a.s.}\simeq
\sqrt{\frac{N}{2\sigma^4}}\left(\widehat{\sigma}^2_{N,M}-\sigma^2\right)\\
&+\frac{\sqrt{N}\theta^2}{M\sqrt{2}\sigma^2(B-A)}\sum_{j=1}^M
\left[\frac{\sigma^2(B-A)}{\theta^2}-N\mathbb{E}\left(u_x(t_j,x_i)-u_x(t_j,x_{i-1})\right)^2\right].
\end{align*}
By Proposition \ref{prop:uxasymp} again, we have
\begin{equation*}
\sqrt{N}\left[\frac{\sigma^2(B-A)}{\theta^2}-N\mathbb{E}\left(u_x\left(t,x_i\right)-u_x(t,x_{i-1})\right)^2	\right]
\longrightarrow 0,\quad \mbox{as}\ N \to \infty.
\end{equation*}
Moreover, Theorem \ref{th:CLT} combined with Theorem \ref{MSthm2} implies that $\displaystyle\w\lim_{N\to \infty}\mathcal{Q}_{N,M}=\mathcal{N}(0,1)$. 
This, and using Slutsky's theorem, implies \eqref{eq:tildeSigmaAnNor}. 
Finally, \eqref{eq:tildeThetaAsNor} follows by the Delta method and \eqref{eq:tildeSigmaAnNor}. 
The proof is complete. 
\end{proof}

\subsection{Discrete observations of $u$}\label{sec:discrtSolU}

While the estimators $\wh\theta_{N,M}^2$ and $\wh \sigma_{N,M}^2$  have desirable statistical properties,  they assume that the observer measures the values of the derivative $u_x(t,x)$ which practically speaking is a questionable assumption. As argued in  \cite{CialencoKimLototsky2019}, for similar estimators involving spatial derivatives, it is natural to replace  $u_x(t_j,x_i)$ in \eqref{eq:t-estimator-theta} and \eqref{eq:t-estimator-sigma} by a finite difference approximation of derivative such as $\Delta_h u(t_j,x_i)$, defined in \eqref{eq:derApprox}. In particular, with $h=(B-A)/N$ and $a=1,b=0$ (or $a=0,b=1$), assuming the solution $u$ is sampled discretely in the physical space-time domain $\Upsilon_{N,M}$, we can consider the following estimators 
\begin{align}
\check{\theta}^2_{N,M} & := \frac{\sigma^2M(B-A)^3}{N^2\sum_{j=1}^M\sum_{i=1}^{N} \left( u(t_j,x_{i+1})-2u(t_j,x_i)+u(t_j,x_{i-1})\right)^2}, \label{eq:Dis-estimators-theta-forward}\\
\check{\sigma}^2_{N,M} & := \frac{\theta^2 N^2 \sum_{j=1}^M\sum_{i=1}^{N} \left(u(t_j,x_{i+1})-2u(t_j,x_i)+u(t_j,x_{i-1})\right)^2}{M(B-A)^3},
\label{eq:Dis-estimators-sigma-forward}
\end{align}
where, for the convenience of writing,  we use the same notation $\Upsilon_{N,M}$ to denote the time-space grid $\Upsilon_{N,M}$ from previous section that contains the additional points $(t_j,x_{N+1}) = (t_j,B+h)$.  
Since $\check{\theta}^2_{N,M}$ and $\check{\sigma}^2_{N,M}$ are just approximations of $\wh{\theta}^2_{N,M}$ and $\wh{\sigma}^2_{N,M}$, it is tempting to argue that the consistency and asymptotic normality will be preserved. However, as we prove below, since the finite difference approximation of the derivative is done at the critical spatial regularity, namely $u(t,x)$ is only $3/2-$ H\"older continuous, 
$\check{\theta}^2_{N,M}$ and $\check{\sigma}^2_{N,M}$ are no longer consistent, or asymptotic normal, unless a multiplicative adjustment is made. This `bias' was first noticed in the numerical experiments in \cite{CialencoKimLototsky2019}. Here, we formally derive an explicit value of this bias. In particular, we prove that the correction constant depends on the finite difference used for approximations. 

Towards this end, we fix $T>0$, $M\in\bN$,  $\gamma\geq1$, and assume that $a,b$ from \eqref{eq:derApprox}  are such that $a,b\in[0,1], \ a+b>0$.  We denote by $\widetilde{\Upsilon}_{N,M}$ the space-time sampling grid of the form 
\begin{align*}
\widetilde{\Upsilon}_{N,M}= \Big\{ (t_k,y_i),(t_k,z_i) \mid & \ 0<t_1<\cdots<t_M=T,  \ x_{j+1} - x_j = x_1-x_0,\\
& y_j:=x_j+\frac{a(B-A)}{N^{\gamma}}, 
 \ z_j:=x_j-\frac{b(B-A)}{N^{\gamma}}, 
\ j=0,\ldots,N \Big\}.
\end{align*}
See the picture below corresponding to a time cross section of $\widetilde{\Upsilon}_{N,M}$.  

\begin{center}
\begin{picture}(300,60)
	\put(20,20){\line(0,1){10}}
	\put(18,15){\tiny $x_0$}
	
	\put(55,20){\line(0,1){6}}
	\put(51,15){\tiny $x_1$}
	
	\put(90,20){\line(0,1){6}}
	\put(86,15){\tiny $x_2$}
	\put(125,20){\line(0,1){6}}
	\put(121,15){\tiny $x_3$}
	\put(152,15){\tiny $\cdots \cdots$}
	\put(200,20){\line(0,1){6}}
	\put(194,15){\tiny $x_{N-1}$}
	\put(235,20){\line(0,1){6}}
	\put(229,15){\tiny $x_{N}$}
	\put(0,25){\line(1,0){250}}
	
	\put(12,23){\line(0,1){8}}
	\put(10,35){\tiny $z_0$}	

	\put(26,23){\line(0,1){8}}
	\put(22,35){\tiny $y_0$}	

	\put(47,23){\line(0,1){8}}
	\put(43,35){\tiny $z_1$}
	
	\put(82,23){\line(0,1){8}}
	\put(78,35){\tiny $z_2$}
	\put(117,23){\line(0,1){8}}
	\put(114,35){\tiny $z_3$}
	\put(192,23){\line(0,1){8}}
	\put(182,35){\tiny $z_{N-1}$}	
	\put(227,23){\line(0,1){8}}
	\put(217,35){\tiny $z_{N}$}
	
	\put(61,23){\line(0,1){8}}
	\put(58,35){\tiny $y_1$}
	\put(96,23){\line(0,1){8}}
	\put(93,35){\tiny $y_2$}
	\put(131,23){\line(0,1){8}}
	\put(128,35){\tiny $y_3$}
	\put(206,23){\line(0,1){8}}
	\put(206,44){\vector(0,-1){12}}
	\put(199,47){\tiny $y_{N-1}$}
	\put(241,23){\line(0,1){8}}
	\put(241,44){\vector(0,-1){12}}	
	\put(234,47){\tiny $y_{N}$}
\end{picture}
\end{center}
When $\gamma = 1$ and $a=1,b=0$, we recover the uniform spatial grid, i.e. $\widetilde{\Upsilon}_{N,M}=\Upsilon_{N,M}$. We also obtain a uniform grid when $a+b=1$ or $a+b=1/2$ or $a=b=1$, all being equal to $\Upsilon_{N,M}$ up to some spatial end points. Generally speaking $\widetilde{\Upsilon}_{N,M}$ is not necessarily  a uniform grid, and if $\gamma>1$, then, as $N$ increases, the distance between grid points $x_j,z_j$ (or $y_j,x_j$) decays faster than the reciprocal of number of sampled points.  
The uniform sampling grid $\Upsilon_{N,M}$ is of the most practical interest, while the proposed general sampling scheme is mostly of theoretical value that shows that the design of the sampling scheme may chance the asymptotic properties of the estimators. 

Consider the following estimators
\begin{align}
\widetilde{\theta}^2_{N,M} & := \frac{\sigma^2M(B-A)}{\sum_{j=1}^M\sum_{i=1}^{N} \left(\Delta_{\frac{B-A}{N^{\gamma}}}  u(t_j,x_i)-\Delta_{\frac{B-A}{N^{\gamma}}}u(t_j,x_{i-1})\right)^2}, \label{eq:Dis-estimators-theta}\\
\widetilde{\sigma}^2_{N,M} & := \frac{\theta^2 \sum_{j=1}^M\sum_{i=1}^{N} \left(\Delta_{\frac{B-A}{N^{\gamma}}}u(t_j,x_i)-\Delta_{\frac{B-A}{N^{\gamma}}}u(t_j,x_{i-1})\right)^2}{M(B-A)}.
\label{eq:Dis-estimators-sigma}
\end{align}
In addition, we put
\begin{align*}
	\mu:=\left\{
	\begin{array}{lll}
		&\displaystyle\frac{1}{a+b}-\frac{1}{3(a+b)^2}, &\mbox{if}\ \gamma=1,\ a+b\geq 1\\
		&1-\displaystyle\frac{(a+b)}{3}, &\mbox{if}\ \gamma=1,\ a+b< 1\\
		&1, & \mbox{if} \ \gamma>1,
	\end{array}
	\right.
\end{align*}
that will serve as the correction constant or it reciprocal. 
Note that with $\gamma=1$, $a=1,b=0$, we have $\widetilde{\theta}^2_{N,M} = \check{\theta}^2_{N,M}$ and $\widetilde{\sigma}^2_{N,M} = \check{\sigma}^2_{N,M}$, and in this case $\mu=2/3$. Moreover, $\mu=2/3$, when $\gamma=1$, and $a+b=1$ or $a+b=1/2$, both corresponding to sampling the solution also at a uniform spatial grid. On the other hand, when $\gamma=1$ and $a=b=1$, we get $\mu=5/12$. 
Recall that these are the only uniform spatial grids when $\gamma=1$.

\begin{theorem}\label{th:est-Du}
	Assume that the solution $u(t,x)$ is observed at the mesh points $\widetilde{\Upsilon}_{N,M}$. 
	In addition, assume that $a,b\in[0,1], \ a+b>0$ in \eqref{eq:derApprox}. 
	Then:
	\begin{enumerate}[(i)]
		\item given that $\sigma$ is known,
		\begin{align}
			&\bP\!-\!\lim_{N\to \infty}\mu\widetilde{\theta}^2_{N,M} =\theta^2, \label{eq:consistTheta2}\\
			&\w\lim_{N\to \infty} \sqrt{N}\left(\mu\widetilde{\theta}^2_{N,M}-\theta^2\right)=
			\mathcal{N}\left(0,2\theta^4\right), \label{eq:AsympNormTheta2}
		\end{align}
	where $\bP\!-\!\lim$ denotes the limit in probability.

		\item given that $\theta$ is known,
		\begin{align}
			&\bP\!-\!\lim_{N\to \infty}\frac{1}{\mu}\widetilde{\sigma}^2_{N,M} =\sigma^2, \label{eq:consistSigma2}\\
			&\w\lim_{N\to \infty} \sqrt{N}\left(\frac{1}{\mu}\widetilde{\sigma}^2_{N,M}-\sigma^2\right)=
			\mathcal{N}\left(0,2\sigma^4\right).\label{eq:AsympNormSigma2}
		\end{align}
	\end{enumerate}
\end{theorem}

\begin{proof}
For every $t>0$ and $x\in \mathbb{R}$, we have
\begin{equation}\label{eq:AsympDu}
\lim_{N\to \infty}N\mathbb{E}
\left|\Delta_{\frac{B-A}{N^{\gamma}}}u\left(t,x+ (B-A)/N\right) - \Delta_{\frac{B-A}{N^{\gamma}}}u\left(t,x\right)\right|^2 = \frac{\mu\sigma^2(B-A)}{\theta^2}, 
\end{equation}
with the detailed proof presented in Appendix~\ref{proof:eq:AsympDu}. Then, \eqref{eq:consistTheta2} and \eqref{eq:consistSigma2} follow at once. 

Asymptotic normality of $\widetilde{\theta}_{N,M}^2$ and  $\widetilde{\sigma}_{N,M}^2$ will be proved by a similar methods as in the proof of Theorem~\ref{th:est-ux-t}, and for brevity of writing some steps will be shortened. Let 
\begin{align*} 
\widetilde{U}(t,x_i)&:= \Delta_{\frac{B-A}{N^{\gamma}}}u(t,x_i)-\Delta_{\frac{B-A}{N^{\gamma}}}u(t,x_{i-1}){^2}\\
\mathbb{E}\widetilde{U}(t)&:=\mathbb{E}\left(\Delta_{\frac{B-A}{N^{\gamma}}}u(t,x_i)-\Delta_{\frac{B-A}{N^{\gamma}}}u(t,x_{i-1})\right)^2.
\end{align*}
Note that the right hand side of the last identity does not depend on $i$. Consequently, we put
$$
\widetilde{Q}_{N,M}:=\frac{1}{M}\sum_{j=1}^M\sum_{i=1}^{N} \left[\frac{\widetilde{U}(t_j,x_i)}{\mathbb{E} \widetilde{U}(t_j)}-1\right].
$$
Analogous to \eqref{eq:QN-multi}, we define the canonical Hilbert space $\widetilde{\mathcal{H}}$ associated to the Gaussian sequence 
$$
\left\{\displaystyle\frac{\Delta_{\frac{B-A}{N^{\gamma}}}u(t_j,x_k)-\Delta_{\frac{B-A}{N^{\gamma}}}u(t_j,x_{k-1})}{\left(\mathbb{E}\widetilde{U}(t_j)\right)^{1/2}},\ j=1,\dots, M,\ k=1,\dots,N\right\}_{M,N\in \mathbb{N}}
$$ 
and by the product formula for multiple stochastic integrals, we write
$$
\widetilde{Q}_{N,M}=\frac{1}{M}\sum_{j=1}^M
\sum_{i=1}^{N} \widetilde{I}^{t_j,t_j}_2\left(1_{[x_{i-1},x_i]}^{\otimes 2}\right),
$$
where $\widetilde{I}^{t_1,\dots,t_n}_n$ is the $n-$th multiple stochastic integral with a symmetric function $f$:
$$
\widetilde{I}^{t_1,\dots,t_n}_n(f):=\int_{\mathbb{R}^n} f(y_1,\dots,y_n)\dif \Delta_{\frac{B-A}{N^{\gamma}}}u(t_1,y_1)
\cdots \dif \Delta_{\frac{B-A}{N^{\gamma}}}u(t_n,y_n).
$$
Then, one can show that 
\begin{align}
\mathbb{E}\widetilde{Q}_{N,M}^2 =\frac{2N}{M}+ \widetilde{Q}_{1}+\widetilde{Q}_{2}+\widetilde{Q}_{ND}, \label{eq:QNM_decompose}
\end{align}
where
\begin{align*}
\widetilde{Q}_{1}&=\frac{4}{M^2}\sum_{\ell>k}
	\frac{1}{\mathbb{E} U(t_k)\mathbb{E} U(t_{\ell})} \sum_{i=1}^N \bigg[\mathbb{E}\left(\Delta_{\frac{B-A}{N^{\gamma}}}(t_k,x_i)-\Delta_{\frac{B-A}{N^{\gamma}}}(t_k,x_{i-1})\right)\\
	&\qquad \qquad \times \left(\Delta_{\frac{B-A}{N^{\gamma}}}(t_{\ell},x_i)-\Delta_{\frac{B-A}{N^{\gamma}}}(t_{\ell},x_{i-1})\right)\bigg]^2,\\
\widetilde{Q}_{2}
&=\frac{4}{M^2}\sum_{k=1}^M\left(\mathbb{E} \widetilde{U}(t_k)\mathbb{E} )\right)^{-2} 
\sum_{i>j} \bigg[\mathbb{E}\left(\Delta_{\frac{B-A}{N^{\gamma}}}u(t_k,x_i)-\Delta_{\frac{B-A}{N^{\gamma}}}u(t_k,x_{i-1})\right)\\
&\qquad \qquad  \times
\left(\Delta_{\frac{B-A}{N^{\gamma}}}u(t_{k},x_j)-\Delta_{\frac{B-A}{N^{\gamma}}}u(t_{k},x_{j-1})\right)\bigg]^2,\\
\widetilde{Q}_{ND}
&=\frac{8}{M^2}\sum_{\ell>k}\left(\mathbb{E} \widetilde{U}(t_k)\mathbb{E} \widetilde{U}(t_{\ell})\right)^{-1} 
\sum_{i>j} \bigg[\mathbb{E}\left(\Delta_{\frac{B-A}{N^{\gamma}}}u(t_k,x_i)-\Delta_{\frac{B-A}{N^{\gamma}}}u(t_k,x_{i-1})\right)\\
&\qquad \qquad  \times
\left(\Delta_{\frac{B-A}{N^{\gamma}}}u(t_{\ell},x_j)-\Delta_{\frac{B-A}{N^{\gamma}}}u(t_{\ell},x_{j-1})\right)\bigg]^2.
\end{align*}
Similar to $Q_{1}$, one can show that
	\begin{align*}
	\lim_{N\to \infty} \frac{1}{N}\widetilde{Q}_{1}=2-\frac{2}{M}.
	\end{align*}
Moreover, after a series of algebraic transformations, and by using Lemma~\ref{lemma:uhDuhproducs}, we obtain 
\begin{align}
\widetilde{Q}_{2} 
&= \frac{4}{M^2}\sum_{k=1}^M\left(\mathbb{E} \widetilde{U}(t_k)\right)^{-2} 
\sum_{i=1}^{N-1} (N-i)\widetilde{\Phi}_N^2(i,t_k,t_{k}), \label{eq:tildeQC2}\\
\widetilde{Q}_{ND} 
&= \frac{8}{M^2}\sum_{\ell>k}\left(\mathbb{E} \widetilde{U}(t_k)\mathbb{E} \widetilde{U}(t_{\ell})\right)^{-1} 
\sum_{i=1}^{N-1} (N-i)\widetilde{\Phi}_N^2(i,t_k,t_{\ell}), \label{eq:tildeQND}
\end{align}
where 
\begin{align*}
\widetilde{\Phi}_N(i,t_k,t_{\ell}) &:=\frac{\tau\theta N^{2\gamma}}{2(a+b)^2(B-A)^2} \bigg(2F(i,t_k,t_{\ell})-F(i+1,t_k,t_{\ell}) - F(i-1,t_k,t_{\ell})\bigg), \\
F\left(i,t_k,t_{\ell}\right)  &:=
\int_0^{t_k}\int_0^{t_{\ell}} (s_1+s_2)^{-1/2}\\
&\qquad \quad \quad \times \left(2e^{-\frac{i^2(B-A)^2}{4\theta  N^2(s_1+s_2)}}-e^{-\frac{\left(i+(a+b)N^{1-\gamma}\right)^2(B-A)^2}{4\theta N^2(s_1+s_2)}}-e^{-\frac{\left(i-(a+b)N^{1-\gamma}\right)^2(B-A)^2}{4\theta N^2(s_1+s_2)}}
\right)\dif s_1\dif s_2,
\end{align*}
for $i=1,\dots,N-1$ and $k,\ell=1,2,\dots,M$. We also define the normalized quadratic variation 
$$
\widetilde{\mathcal{Q}}_{N,M}:=\sqrt{\frac{1}{2N}}\widetilde{Q}_{N,M}.
$$

As proved in Appendix~\ref{proof:eqs:tildeQE-CLT1}, the following assertions hold true: 
\begin{align}
&\lim_{N\to \infty} \frac{1}{N}\widetilde{Q}_{2}= 0, \label{eq:LimTildeQC2} \\
&\lim_{N\to \infty} \frac{1}{N}\widetilde{Q}_{ND}= 0, \label{eq:LimTildeQND1} \\ 
&\lim_{N\to \infty}\frac{1}{N}\mathbb{E}\widetilde{Q}^2_{N,M} = 2, \label{eq:LimTildeQND2}
\end{align}
and for each $N,M\geq 1$, there exists a constant $C>0$ such that
\begin{equation}\label{eq:dTV2}
d_{TV}\left(\widetilde{\mathcal{Q}}_{N,M}, \mathcal{N}(0,1)\right)\leq \frac{C}{N}.
\end{equation}
Using these, we continue 
	\begin{align*}
	\widetilde{\mathcal{Q}}_{N,M}
	&=\frac{1}{M\sqrt{2}}\sum_{j=1}^M\left[\frac{\left(\sum_{i=1}^N 
		\left(\Delta_{\frac{B-A}{N^{\gamma}}}u(t_j,x_i)-\Delta_{\frac{B-A}{N^{\gamma}}}u(t_j,x_{i-1})\right)^2\right)-\frac{\mu\sigma^2(B-A)}{\theta^2}}{\sqrt{N}\mathbb{E}\left(\Delta_{\frac{B-A}{N^{\gamma}}}u(t_j,x_1)-\Delta_{\frac{B-A}{N^{\gamma}}}u(t_j,x_{0})\right)^2}\right]\\
	&\qquad +\frac{1}{M\sqrt{2}}\sum_{j=1}^M\left[\frac{\frac{\mu\sigma^2(B-A)}{\theta^2}-N\mathbb{E}\left(\Delta_{\frac{B-A}{N^{\gamma}}}u(t_j,x_i)-\Delta_{\frac{B-A}{N^{\gamma}}}u(t_j,x_{i-1})\right)^2}{\sqrt{N}\mathbb{E}\left(\Delta_{\frac{B-A}{N^{\gamma}}}u(t_j,x_1)-\Delta_{\frac{B-A}{N^{\gamma}}}u(t_j,x_0)\right)^2}\right], \\
	&\overset{a.s.}\simeq \frac{\sqrt{N}\theta^2}{M\sqrt{2}\mu\sigma^2(B-A)}\left[\sum_{j=1}^M\sum_{i=1}^N 
	\left(\Delta_{\frac{B-A}{N^{\gamma}}}u(t_j,x_i)-\Delta_{\frac{B-A}{N^{\gamma}}}u(t_j,x_{i-1})\right)^2-\frac{M\mu\sigma^2(B-A)}{\theta^2}\right]\\
	& \qquad + \frac{\sqrt{N}\theta^2}{M\sqrt{2}\mu\sigma^2(B-A)}\sum_{j=1}^M\left[
	\frac{\mu\sigma^2(B-A)}{\theta^2}-N\mathbb{E}\left(\Delta_{\frac{B-A}{N^{\gamma}}}u(t_j,x_i)-\Delta_{\frac{B-A}{N^{\gamma}}}u(t_j,x_{i-1})\right)^2 
	\right] \\
	&\overset{a.s.}\simeq 
	\sqrt{\frac{N}{2\sigma^4}}\left(\frac{1}{\mu}\widetilde{\sigma}^2_{N,M}-\sigma^2\right)\\
	&\qquad +
	\frac{\sqrt{N}\theta^2}{M\sqrt{2}\mu\sigma^2(B-A)}\sum_{j=1}^M\left[
	\frac{\mu\sigma^2(B-A)}{\theta^2}-N\mathbb{E}\left(\Delta_{\frac{B-A}{N^{\gamma}}}u(t_j,x_i)-\Delta_{\frac{B-A}{N^{\gamma}}}u(t_j,x_{i-1})\right)^2
	\right]\\
	&\overset{d}\longrightarrow \mathcal{N}(0,1), \quad \mbox{as}\ N\to \infty,
	\end{align*}
	which completes the proof of \eqref{eq:AsympNormSigma2}. Finally, \eqref{eq:AsympNormTheta2} follows by the delta method. 

\end{proof}

\subsection{The case of bounded domain}\label{sec:boundedStats}
The statistical analysis of \eqref{eq:1dPAM} on bounded domain and with Dirichlet boundary conditions was initiated in \cite{CialencoKimLototsky2019}, which also served as a basis for this study. Generally speaking, all results on estimation of $\theta$ and $\sigma$ stated above for $G=\bR$ remain true verbatim for Dirichlet boundary problems on $G=(0,\pi)$, despite substantial differences between  the form of the fundamental solutions of the heat equation $P$ and $P^D$. Formally, assume that the observed phenomena is governed by the evolution equation \eqref{eq:1dPAM}, with $G=(0,\pi)$ and endowed with zero boundary conditions. Also assume that the space grid points of $\Upsilon_{N,M}$ and $\widetilde{\Upsilon}_{N,M}$ belong to $G$. 

\begin{theorem}
Assume that $u_x$ is observed on $\Upsilon_{N,M}$. Then, the estimators $\wh{\theta}^2_{N,M}, \wh{\sigma}_{N,M}^2$, defined by \eqref{eq:t-estimator-theta}, \eqref{eq:t-estimator-sigma} satisfy \eqref{eq:tildeThetaConsist}-\eqref{eq:tildeSigmaAnNor}. Similarly, if $u$ is observed at grid-points $\wt\Upsilon_{N,M}$, then the estimators $\widetilde{\theta}^2_{N,M}, \widetilde{\sigma}_{N,M}^2$, defined by \eqref{eq:Dis-estimators-theta}, \eqref{eq:Dis-estimators-sigma} satisfy \eqref{eq:consistTheta2}-\eqref{eq:AsympNormSigma2}.
\end{theorem}
One of the advantages of the methods of proofs developed in this paper is that the main ideas, up to some technical computations, work for both the unbounded and bounded domain. 
For the sake of brevity we skip the (majority of) repetitive proofs of the auxiliary results, and focus on the major steps of the second part. 

First, we note that in view of \eqref{eq:mild-sol}  the mild solution $u$ in this case can be written as
\begin{equation}\label{eq:SolBounded}
	u(t,x) = \sqrt{\frac{2}{\pi}}\frac{\sigma}{\theta}\sum_{k\geq 1} \frac{1-e^{-k^2\theta t}}{k^2}\sin(kx)\xi_k,
\end{equation}
where 
$\set{\xi_k}_{k\in\bN}$ is a sequence of independent and identically distributed standard normal random variables. Without loss of generality, assume that $B-A=1$. We will show that $u$ satisfies \eqref{eq:AsympDu}. 
As before, we let $a,b\in[0,1], a+b>0$ in \eqref{eq:derApprox}. 
Using the representation \eqref{eq:SolBounded} and expanding $\Delta_{\frac{1}{N^{\gamma}}} u(t,x)$, after some elementary, albeit tedious, computations, we write 
\begin{align*}
\mathbb{E}\left(\Delta_{\frac{1}{N^{\gamma}}}u\left(t,x+\frac{1}{N}\right)-\Delta_{\frac{1}{N^{\gamma}}}u(t,x)\right)^2=:\widehat{A}_1+\widehat{A}_2,
\end{align*}
where
\begin{align*}
\widehat{A}_1&:=2\widehat{B}_1(0)-2\widehat{B}_1\left(\frac{1}{N}\right)+\widehat{B}_1\left(\frac{1}{N}+\frac{(a+b)}{N^{\gamma}}\right)+\widehat{B}_1\left(\frac{1}{N}-\frac{(a+b)}{N^{\gamma}}\right),\\
\widehat{A}_2&:=-\widehat{B}_2\left(\frac{2}{N}+\frac{2a}{N^{\gamma}}\right)+2\widehat{B}_2\left(\frac{1}{N}+\frac{2a}{N^{\gamma}}\right)+2\widehat{B}_2\left(\frac{2}{N}+\frac{(a-b)}{N^{\gamma}}\right)-4\widehat{B}_2\left(\frac{1}{N}+\frac{(a-b)}{N^{\gamma}}\right)\\
&\quad\ -\widehat{B}_2\left(\frac{2}{N}-\frac{2b}{N^{\gamma}}\right)+2\widehat{B}_2\left(\frac{1}{N}-\frac{2b}{N^{\gamma}}\right)-\widehat{B}_2\left(\frac{2a}{N^{\gamma}}\right)+2\widehat{B}_2\left(\frac{a-b}{N^{\gamma}}\right)-\widehat{B}_2\left(-\frac{b}{N^{\gamma}}\right),
\end{align*}
and
\begin{align*}
\widehat{B}_1(z)=\widehat{B}_1(N,t,z)& :=\frac{2\sigma^2 N^{2\gamma}}{\pi (a+b)^2\theta^2}\sum_{k\geq 1} \frac{\left(1-e^{-k^2\theta t}\right)^2}{k^4}\cos(kz), \\
\widehat{B}_2(z)=\widehat{B}_2(N,t,x,z)& := \frac{\sigma^2 N^{2\gamma}}{\pi (a+b)^2\theta^2}\sum_{k\geq 1} \frac{\left(1-e^{-k^2\theta t}\right)^2}{k^4} \cos\left(k\left(2x+z\right)\right).
\end{align*}
Let 
\begin{align*}
f(z)&:=\sum_{k\geq 1}  \frac{\left(1-e^{-k^2\theta t}\right)^2\cos(kz)}{k^2} \\
&=\sum_{k\geq 1}\frac{\cos(kz)}{k^2}-2\sum_{k\geq 1} \frac{e^{-k^2\theta t}\cos(kx)}{k^2}
+\sum_{k\geq 1} \frac{e^{-2k^2\theta t}\cos(kz)}{k^2},\ z\in(0,\pi).
\end{align*}
For each $t>0$, $x\in \mathbb{R}$ and $N\in \mathbb{N}$, the Taylor series of $\widehat{B}_1(z)$ at $z=0$ of order three gives that $\widehat{A}_1\simeq \displaystyle\frac{2\mu \sigma^2 f'(0)}{\pi \theta^2 N}$, as $N\to \infty$,
and by a similar argument $\widehat{A}_2 \sim\displaystyle\frac{1}{N^2}$, as $N\to \infty$. Since $f'(0)=\pi/2$, we achieve
	\begin{align}\label{eq:AsympDu-bdd}
	&\lim_{N\to \infty}N\mathbb{E}\left|\Delta_{\frac{1}{N^{\gamma}}}u\left(t,x+\frac{1}{N}\right)-\Delta_{\frac{1}{N^{\gamma}}}u(t,x)\right|^2=\frac{\mu\sigma^2}{\theta^2}.
	\end{align}
Thus, same as in Theorem~\ref{th:est-Du}, weak consistency of $\widetilde{\theta}^2_{N,M}$ and $\widetilde{\sigma}_{N,M}^2$ follow immediately. 

Next, 	using that $\mathbb{E}\xi_k\xi_{\ell}=1_{\{k=\ell\}}$ for any $k,\ell\geq 1$, combined with trigonometric identity  $\sin\alpha\sin\beta=(\cos(\alpha-\beta)-\cos(\alpha+\beta))/2$,
we deduce the counterpart of \eqref{eq:A1} that for $t,t'>0$ and $x,y\in (0,\pi)$, 
\begin{align*}
\mathbb{E}\Delta_hu(t,x)\Delta_hu(t',y)
& =\frac{\sigma^2}{\pi(a+b)^2h^2\theta^2}\bigg[2\widehat{F}(t,t',x-y)-\widehat{F}(t,t',x-y+(a+b)h)\\ 
& \qquad-\widehat{F}(t,t',x-y-(a+b)h) + 2\widehat{F}(t,t',x+y+(a-b)h)\\ 
& \qquad -\widehat{F}(t,t',x+y+2ah)-\widehat{F}(t,t',x+y-2bh)\bigg],
\end{align*}
where 
$$
\widehat{F}(t,t',x):=\sum_{k\geq 1} \frac{(1-e^{-k^2\theta t})(1-e^{-k^2\theta t'})}{k^4}\cos(kx).
$$
Same as in the proof of Theorem~\ref{th:est-Du}, we define $\widetilde{Q}_{N,M}$, and derive the decomposition \eqref{eq:QNM_decompose}.  
Let us focus on the nondiagonal term $\widetilde{Q}_{ND}$, which in this case can be represented as 
\begin{align*}
\widetilde{Q}_{ND}= \frac{8}{M^2}\sum_{\ell>k}^M\left(\mathbb{E} \widetilde{U}(t_k)\mathbb{E} \widehat{U}(t_{\ell})\right)^{-1} 
\sum_{i>j}^N\widehat{\Phi}_N^2(i,j,t_k,t_{\ell}),
\end{align*}
where we recall that $\mathbb{E}\widetilde{U}(t):=\mathbb{E}\left(\Delta_{\frac{1}{N^{\gamma}}}u(t,x_i)-\Delta_{\frac{1}{N^{\gamma}}}u(t,x_{i-1})\right)^2$
and where 
\begin{align*}
&\widehat{\Phi}_N(i,j,t_k,t_{\ell}):=\mathbb{E}\left(\Delta_{\frac{1}{N^{\gamma}}}u(t_k,x_i)-\Delta_{\frac{1}{N^{\gamma}}}u(t_k,x_{i-1})\right)\left(\Delta_{\frac{1}{N^{\gamma}}}u(t_{\ell},x_j)-\Delta_{\frac{1}{N^{\gamma}}}u(t_{\ell},x_{j-1})\right)\\
&=\frac{\sigma^2N^{2\gamma}}{\pi(a+b)^2\theta^2}\Bigg[4\widehat{F}\left(t_k,t_{\ell},\frac{i-j}{N}\right)-2\widehat{F}\left(t_k,t_{\ell},\frac{i-j}{N}+\frac{(a+b)}{N^{\gamma}}\right)-2\widehat{F}\left(t_k,t_{\ell},\frac{i-j}{N}-\frac{(a+b)}{N^{\gamma}}\right)\\
&-2\widehat{F}\left(t_k,t_{\ell},\frac{i-j+1}{N}\right)+\widehat{F}\left(t_k,t_{\ell},\frac{i-j+1}{N}+\frac{(a+b)}{N^{\gamma}}\right)+\widehat{F}\left(t_k,t_{\ell},\frac{i-j+1}{N}-\frac{(a+b)}{N^{\gamma}}\right)\\
&-2\widehat{F}\left(t_k,t_{\ell},\frac{i-j-1}{N}\right)+\widehat{F}\left(t_k,t_{\ell},\frac{i-j-1}{N}+\frac{(a+b)}{N^{\gamma}}\right)+\widehat{F}\left(t_k,t_{\ell},\frac{i-j-1}{N}-\frac{(a+b)}{N^{\gamma}}\right)\\
&+2\widehat{F}\left(t_k,t_{\ell},\frac{i+j}{N}+\frac{(a-b)}{N^{\gamma}}\right)-\widehat{F}\left(,t_k,t_{\ell},\frac{i+j}{N}+\frac{2a}{N^{\gamma}}\right)-\widehat{F}\left(t_k,t_{\ell},\frac{i+j}{N}-\frac{2b}{N^{\gamma}}\right)\\
&-4\widehat{F}\left(t_k,t_{\ell},\frac{i+j-1}{N}+\frac{(a-b)}{N^{\gamma}}\right)+2\widehat{F}\left(t_k,t_{\ell},\frac{i+j-1}{N}+\frac{2a}{N^{\gamma}}\right)+2\widehat{F}\left(t_k,t_{\ell},\frac{i+j-1}{N}-\frac{2b}{N^{\gamma}}\right)\\
&+2\widehat{F}\left(t_k,t_{\ell},\frac{i+j-2}{N}+\frac{(a-b)}{N^{\gamma}}\right)-\widehat{F}\left(t_k,t_{\ell},\frac{i+j-2}{N}+\frac{2a}{N^{\gamma}}\right)-\widehat{F}\left(t_k,t_{\ell},\frac{i+j-2}{N}-\frac{2b}{N^{\gamma}}\right)\Bigg].
\end{align*}
Using similar arguments in the proof of \eqref{eq:AsympDu-bdd}, we treat each term separately, and one can prove that
$\left|\widehat{\Phi}_N(i,j,t_k,t_{\ell})\right|\leq C/N^2$, 
for some constant $C>0$, independent of $i,j,t_k,t_{\ell}$. The terms $\widetilde{Q}_1$ and $\widetilde{Q}_2$ are treated similarly, and consequently \eqref{eq:LimTildeQC2}-\eqref{eq:dTV2} for this case are proved. The rest of the proof coincides with that from Theorem~\ref{th:est-Du}.


\section{Numerical experiments}\label{sec:numerical}
We consider the  equation \eqref{eq:1dPAM} on the bounded domain $G=(0,\pi)$ with zero boundary conditions, and with the following set of parameters: $\theta= 0.1, \ \sigma=0.1, \ T=1$. We will focus on estimating $\sigma$. 
Recall that in this case the solution is given by \eqref{eq:SolBounded}, which we use for our numerical approximations of the solution at any space-time point. We take $30,000$ Fourier coefficients or terms in the series appearing in \eqref{eq:SolBounded}, and the space sampling domain being the uniform grid of $G$. We use two finite difference approximation schemes in \eqref{eq:Dis-estimators-sigma}: (a) forward differences, by taking $a=1,b=0$, in which case the estimator is $\check\sigma_{N,M}^2$ given by \eqref{eq:Dis-estimators-sigma-forward} and the multiplicative bias $1/\mu=3/2$; (b) central differences, by taking $a=b=1$, now with $1/\mu = 12/5$ and the estimator $\widetilde\sigma^2_{N,M}$ is given by  \eqref{eq:Dis-estimators-sigma}.

In Figure~\ref{fig:Fig1} we display three sample paths of the $\sqrt{3/2}\check{\sigma}_{N,M}$ (left panel) and $\sqrt{12/5}\widetilde{\sigma}_{N,M}$ (right panel).   
On the horizontal axis $N$ is equal to the number of points in the discretization of $(0,\pi)$.  
Crossed marked lines correspond to the estimators that use the values of $u$ only at one time instance $t=0.2$ (i.e. $M=1$), while the stared lines depict the estimators that use data from $M=150$ time points, uniformly meshed between 0.2 and 1. The obtained results show that indeed $\sqrt{12/5}\widetilde{\sigma}_{N,M}$ and   $\sqrt{3/2}\check{\sigma}_{N,M}$ are consistent estimators of $\sigma$. This also shows that increasing $M$ does not change the overall behavior of the estimators. Similar results were obtained for $\widehat\sigma_{N,M}$ where the derivative $u_x$ is used. 

To validate the asymptotic normality of these estimators, we simulated $1,000$ paths of the solution $u(t,x)$ at $t=0.2$ and on a spatial grid with $N=1,000$ points. We compute $\check\sigma_{N,1}$ for each path. The normalized density of $\check\sigma_{N,1}$ superposed on the theoretical limiting density is presented in Figure~\ref{fig:Fig2} (left panel). The corresponding Q-Q plot is displayed in the right panel of the same figure. Overall, the obtained results validate the asymptotic normality of the $\check\sigma_{N,M}$. Similar results were obtained for other estimators. 

The numerical computations are performed using programming language Python. The source code is available at \url{https://github.com/cialenco/Stats4SPDEs-SHE}.

\begin{figure}[!ht]
	\centering
	\begin{subfigure}[t]{0.49\textwidth}
		\centering
		\includegraphics[width=\linewidth]{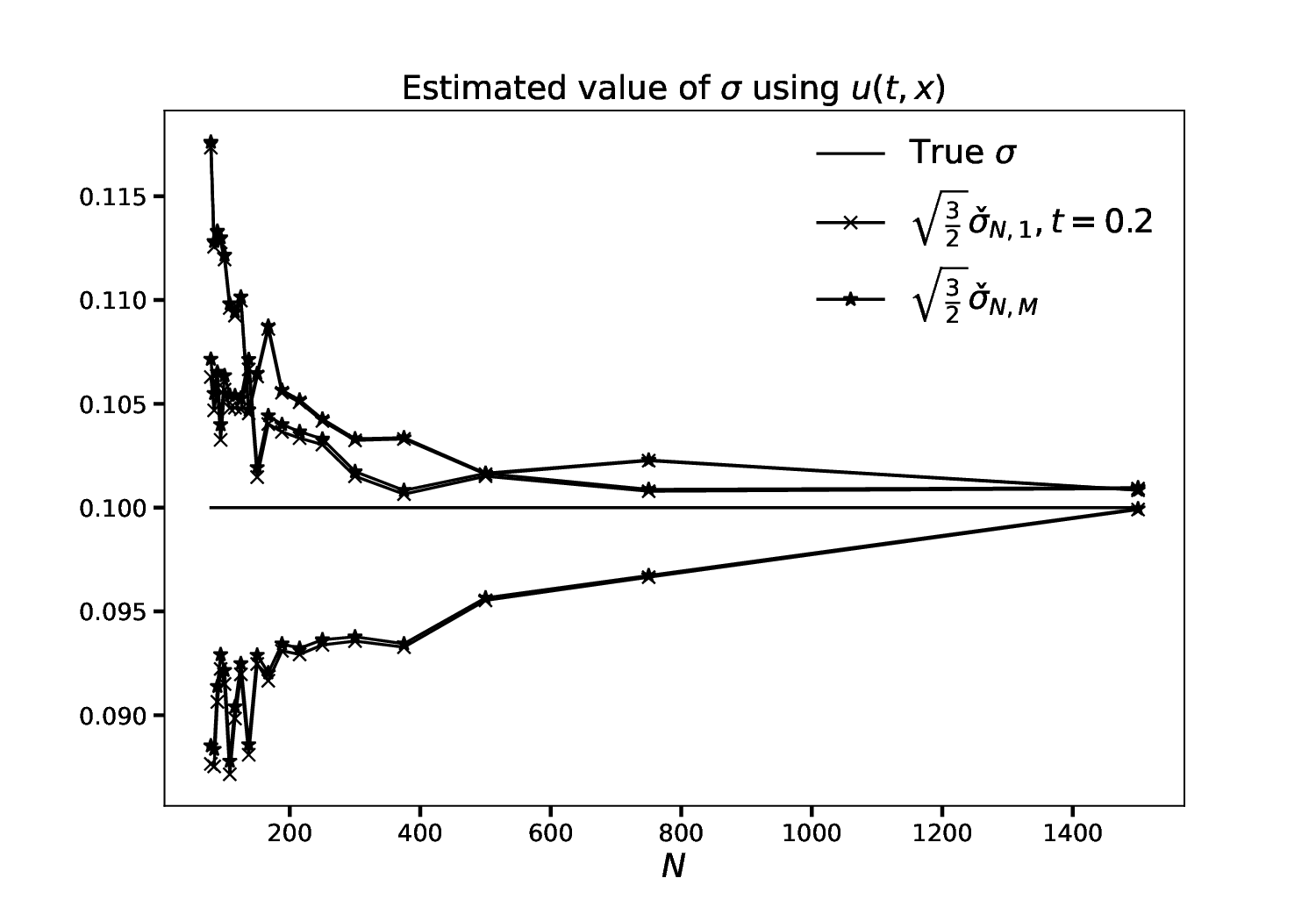}
	\end{subfigure}
	\hfill
	\begin{subfigure}[t]{0.49\textwidth}
		\centering
		\includegraphics[width=\linewidth]{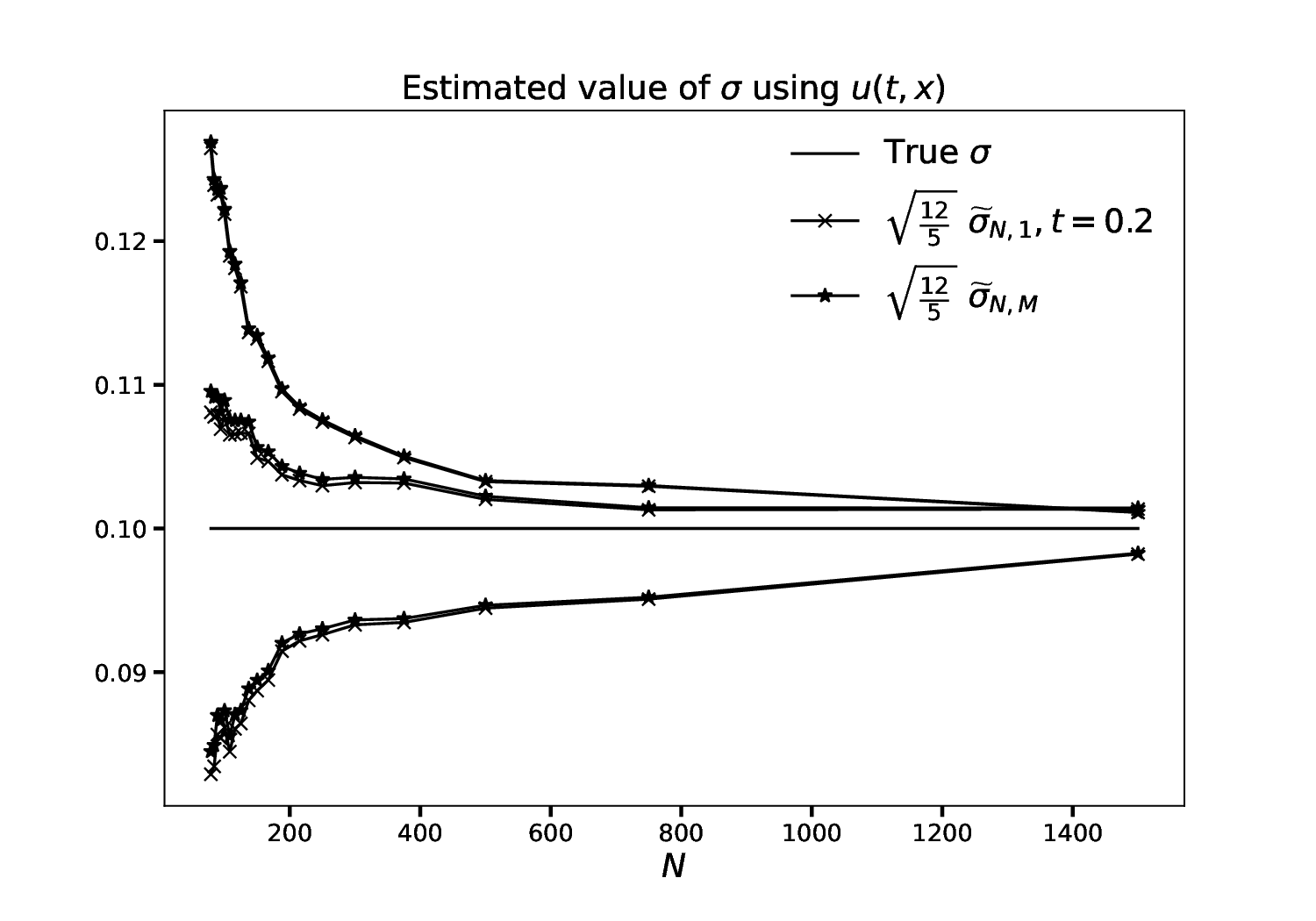}
	\end{subfigure}
	\caption{Three sample paths of $\sqrt{3/2}\check\sigma_{N,M}$ (left panel) and $\sqrt{12/5}\widetilde\sigma_{N,M}$ (right panel). Solid vertical line corresponds to the true parameter $\sigma=0.1$. }
	\label{fig:Fig1}
\end{figure}

\begin{figure}[!ht]
	\centering
	\begin{subfigure}[t]{0.49\textwidth}
		\centering
		\includegraphics[width=\linewidth]{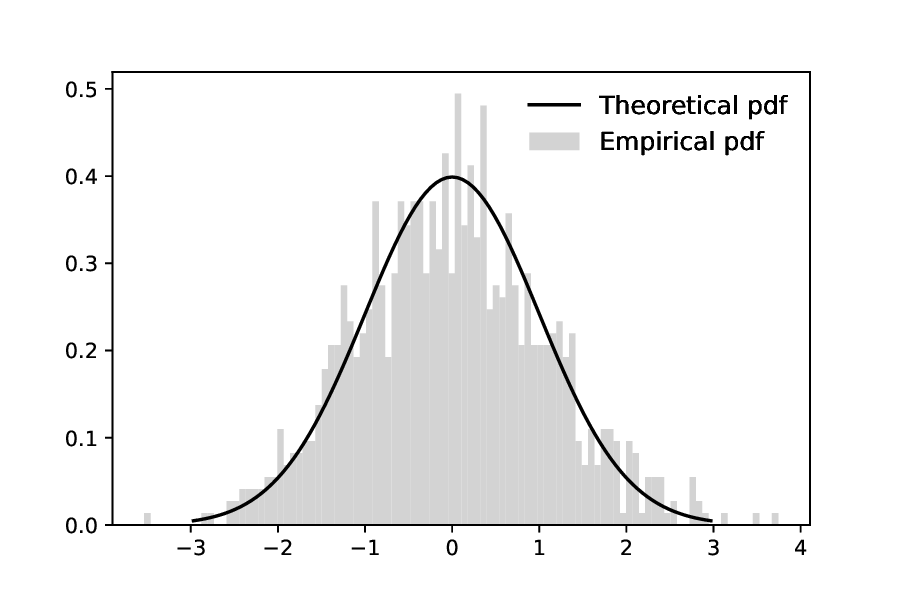}
	\end{subfigure}
	\hfill
	\begin{subfigure}[t]{0.49\textwidth}
		\centering
		\includegraphics[width=\linewidth]{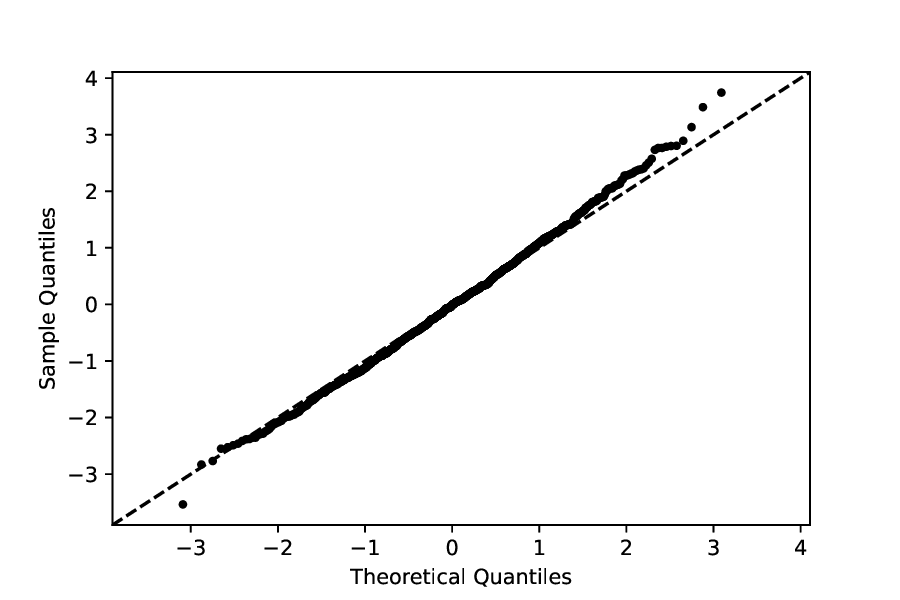}
	\end{subfigure}
	\caption{Left panel: empirical density function of the normalized estimator  $\sqrt{2N}{\sigma}(\frac{1}{\mu}\check\sigma_{N,1}-\sigma)$ (light gray) and the density of a standard Gaussian random variable (black curve). The corresponding Q-Q plot is displayed in the right panel.}
	\label{fig:Fig2}
\end{figure}

\section{Concluding remarks}\label{sec:future}
In this work we further study the stochastic heat equation driven by space-only noise that was initiated in \cite{CialencoKimLototsky2019}. Due to the special structure of the noise, this model yields several interesting statistical features, usually not seen in SPDEs driven by space-time noise. We showed that the ratio $\theta/\sigma$ can be estimated efficiently by measuring the solution in the physical space just at one time instant. However, the average of such estimators computed at several time points does not increase the quality of the estimation (the asymptotic variance does not decrease as $M$ gets larger), in contrast, for example, to the results from \cite{CialencoHuang2017,BibingerTrabs2017,BibingerTrabs2019}. On the other hand, the singular nature of the statistical problem when sampling at different time instances was noticed in \cite{CialencoKimLototsky2019}. In particular it was proved that for the case of the bounded domain $G=(0,\pi)$, the parameter $\theta$ can be determined exactly by knowing one Fourier mode of the solution at two time points, without assuming that $\sigma$ is known. Thus, for a fixed $k\in\bN$, and $0<t_1<t_2$, we consider the following  estimator
$$
\widehat\theta_N^F = \frac{\sum_{i=1}^{N} u(t_2,x_i)\sin(k^2 x_i)}{\sum_{i=1}^{N} u(t_1,x_i)\sin(k^2 x_i)},
$$
where $x_i$'s form a uniform partition of $(0,\pi)$. Using \cite[Theorem~3.2]{CialencoKimLototsky2019}, it is easy to show that $\widehat{\theta}^F_N$ is a consistent estimator of $F_{t_2,t_1}(k^2\theta)$, as $N\to\infty$, and where $F_{a,b}(x)=(1-e^{-ax})/(1-e^{-bx})$. The rate of convergence of this estimator remains an open problem.
Importantly from statistical point of view, in this case the joint estimation of $\theta$ and $\sigma$ in the discrete sampling scheme can be now disentangled, and first we estimate $\theta$  fairly simple, and then estimate $\sigma$ by one of the methods presented herein.  Noticeable, it is not clear at this stage how this argument can be transferred to the case of the full space $G=\bR$.

An important aspect of the obtained results is providing a pathway to statistical analysis of stochastic processes with smooth paths.  We showed that reducing smoothness by naive manipulations of derivatives may lead to nontrivial biases. In the recent paper \cite{CKP21}, the core idea developed in this manuscript was successfully applied, via some inductive steps, to semi-linear SPDEs driven by an additive noise, colored in space (of any degree).

An important extension of the current work is to consider nonlinear counterparts of \eqref{eq:1dPAM}, which will contribute to the limited literature (cf. \cite{IgorNathanAditiveNS2010,PasemannStannat2019,CKP21,ACP2020} and references therein) on statistical inference for nonlinear SPDEs.  The general line of attack is to use the same estimators from the linear equation, and prove that the solution of the nonlilnear equation can be represented as a smooth perturbation if its linear counterpart, hence preserving quadratic variation type arguments in derivation of the estimators.  

For simplicity, we assumed zero initial data, $u(0,x)=0, \ x\in G$, but morally, due to the same smooth perturbation arguments, the results remain true for any smooth initial data. This was also confirmed through numerical experiments.

\section*{Acknowledgments}
The authors are grateful to the editors and the anonymous referees for their helpful comments, suggestions, and insightful questions which helped to improve significantly the paper.

\appendix

\section{Appendix} \label{append:A}

In this section we present the detailed proofs for most of the results in this paper, as well as some needed technical results. For reader's convenience, at the end of this section, we give some known results from Malliavin calculus. We start with a result from Section~\ref{sec:regularity}.

\subsection{Proof of Lemma~\ref{lemma:uhDuhproducs}}\label{proof:lemma:uhDuhproducs}

Recall that for $t>0$, $x\in \mathbb{R}$ and $h>0$
$$
u(t,x)=\sigma\int_0^t\int_{\mathbb{R}} P(s,x-y)\dif W(y)\dif s, \quad \mbox{and}\quad \Delta_hu(t,x)=\sigma \int_0^t\int_{\mathbb{R}} \Delta_hP(s,x-y)\dif W(y)\dif s.	
$$
Then, \eqref{eq:A1} follows immediately by It\^o's isometry and the semigroup property of $P$. 
As far as \eqref{eq:A3}, we compute
\begin{align*}
\mathbb{E}v\left(t,x\right)v\left(t',y\right)
	&=\sigma^2
	\mathbb{E}\left(\int_0^t\int_{\mathbb{R}} P_x(s_1,x-z)\dif W(z)\dif s_1\right)
	\left(\int_0^{t'}\int_{\mathbb{R}} P_x(s_2,y-z)\dif W(z)\dif s_2\right)\\
	&=\frac{\sigma^2}{4\theta^2}\int_0^{t'}\int_0^t\int_{\mathbb{R}}\left(
	\frac{(x-z)(y-z)}{s_1s_2}P(s_1,x-z)P(s_2,y-z)
	\right)\dif z\dif s_1\dif s_2\\
	&=\frac{\sigma^2}{16\pi\theta^{3}}\int_0^{t'}\int_0^t\frac{1}{s_1^{3/2}s_2^{3/2}}
	\int_{\mathbb{R}} (x-z)(y-z)e^{-\frac{(x-z)^2}{4\theta s_1}-\frac{(y-z)^2}{4\theta s_2}}\dif z\dif s_1\dif s_2 \\
	&=\frac{\sigma^2}{16\pi\theta^{3}}\int_0^{t'}\int_0^t\frac{e^{-\frac{(x-y)^2}{4\theta(s_1+s_2)}}}{s_1^{3/2}s_2^{3/2}}
	\int_{\mathbb{R}} \left(z^2-(x-y)z\right)e^{-\frac{s_1+s_2}{4\theta s_1s_2}
		\left(z-\frac{(x-y)s_1}{s_1+s_2}\right)}\dif z\dif s_1\dif s_2.
	\end{align*}
	Using the mean and the variance of Gaussian random variables, we get
	\begin{align*}
	\mathbb{E}v\left(t,x\right)v\left(t',y\right)
	&=\frac{\sigma^2}{4\sqrt{\pi}\theta^{3/2}}\int_0^{t'}\int_0^t\frac{e^{-\frac{(x-y)^2}{4\theta(s_1+s_2)}}}{(s_1+s_2)^{3/2}}
	\left(1-\frac{(x-y)^2}{2\theta(s_1+s_2)}\right)\dif s_1\dif s_2.
	\end{align*}

Identity \eqref{eq:A2} is obtained by analogous evaluations: 
\begin{align*}
\mathbb{E}& \Delta_hu\left(t,x\right)v\left(t',x\right)
	=\sigma^2
	\mathbb{E}\left(\int_0^t\int_{\mathbb{R}} \Delta_hP(s_1,x-y)\dif W(y)\dif s_1\right)\left(\int_0^{t'}\int_{\mathbb{R}} P_x(s_2,x-y)\dif W(y)\dif s_2\right)\\
	&=-\frac{\sigma^2}{2\theta (a+b) h}\int_0^{t'}\int_0^t\int_{\mathbb{R}}
	\frac{x-y}{s_2}\left(P(s_1,x-y+ah)-P(s_1,x-y-bh)\right)P(s_2,x-y)\dif y\dif s_1\dif s_2\\
	&=-\frac{\sigma^2}{8\pi\theta^2(a+b) h}\int_0^{t'}\int_0^t\frac{1}{s_1^{1/2}s_2^{3/2}}
	\int_{\mathbb{R}} (x-y)\left[e^{-\frac{(x-y+ah)^2}{4\theta s_1}-\frac{(x-y)^2}{4\theta s_2}}
	-e^{-\frac{(x-y-bh)^2}{4\theta s_1}-\frac{(x-y)^2}{4\theta s_2}}\right]\dif y\dif s_1\dif s_2\\
	&=-\frac{\sigma^2}{8\pi \theta^2(a+b) h}\int_0^{t'}\int_0^t \frac{e^{-\frac{a^2h^2}{4\theta (s_1+s_2)}}}{s_1^{1/2}s_2^{3/2}}
	\int_{\mathbb{R}} z\cdot e^{-\frac{s_1+s_2}{4\theta s_1s_2}\left(z+\frac{as_2h}{s_1+s_2}\right)^2} \dif z\dif s_1\dif s_2\\
	&\quad+\frac{\sigma^2 }{8\pi \theta^2(a+b) h}\int_0^{t'}\int_0^t \frac{e^{-\frac{b^2h^2}{4\theta (s_1+s_2)}}}{s_1^{1/2}s_2^{3/2}}
	\int_{\mathbb{R}} z\cdot e^{-\frac{s_1+s_2}{4\theta s_1s_2}\left(z-\frac{bs_2h}{s_1+s_2}\right)^2} \dif z\dif s_1\dif s_2\\
	&=\frac{a\sigma^2}{4\sqrt{\pi}\theta^{3/2}(a+b)}\int_0^{t'}\int_0^t\frac{e^{-\frac{a^2h^2}{4\theta(s_1+s_2)}}}{(s_1+s_2)^{3/2}} \dif s_1\dif s_2
	+\frac{b\sigma^2}{4\sqrt{\pi}\theta^{3/2}(a+b)}\int_0^{t'}\int_0^t\frac{e^{-\frac{b^2h^2}{4\theta(s_1+s_2)}}}{(s_1+s_2)^{3/2}} \dif s_1\dif s_2.
	\end{align*}
The proof is complete. 

\subsection{Proofs of results from Section~\ref{sec:stats}}\label{append:ProofsStats}

Without loss of generality, in all technical proofs in this section, we will set $A=0$ and $B=1$. That is, $0=x_0<x_1<\cdots <x_N=1$ and $x_i=i/N$. 

\subsubsection{Proof of Proposition~\ref{prop:uxasymp}}\label{proof:prop:uxasymp}
By Lemma~\ref{lemma:uhDuhproducs}, for $t>0$, $x\in \mathbb{R}$ and $N\geq 1$,
\begin{align*}
B_1&:=\mathbb{E}\left|u_x\left(t,x+\frac{1}{N}\right)-u_x(t,x)\right|^2\\
&=\frac{\tau}{2}\int_0^t\int_0^t 
\frac{1-e^{-\frac{1}{4\theta N^2(s_1+s_2)}}}{(s_1+s_2)^{3/2}}\dif s_1\dif s_2
+\frac{\tau}{4\theta N^2}
\int_0^{t}\int_0^{t} \frac{e^{-\frac{1}{4\theta N^2(s_1+s_2)}}}{(s_1+s_2)^{5/2}}
\dif s_1\dif s_2.
\end{align*}
Consequently, by Lemma~\ref{lemma:intbyparts-1}, we get
\begin{align}
B_1 & =-2\sqrt{2t}\tau\left(1-e^{-\frac{1}{8\theta N^2t}}\right)
+4\sqrt{t}\tau \left(1-e^{-\frac{1}{4\theta N^2t}}\right) \nonumber \\
& \quad 
 +\frac{\tau}{\sqrt{\theta}N}\int_0^{4tN^{2}\theta} \frac{e^{-\frac{1}{s}}}{s^{3/2}}\dif s
-\frac{\tau}{\sqrt{\theta}N}\int_{4tN^{2}\theta}^{8tN^{2}\theta} 
\frac{e^{-\frac{1}{s}}}{s^{3/2}}\dif s.\label{2ndmoment}
\end{align}
Since $1-e^{-\frac{\beta}{N^{2}}}\simeq \beta/N^{2}$, for $\beta>0$, as $N\to \infty$,
we continue 
\begin{equation*}
NB_1-\frac{\tau}{\sqrt{\theta}}\int_0^{\infty}
\frac{e^{-\frac{1}{s}}}{s^{3/2}}\dif s\sim N^{-1},\quad \mbox{as}\ N\to \infty.
\end{equation*}
Note that 
\begin{equation*}
\int_0^{\infty}\frac{e^{-\frac{1}{s}}}{s^{3/2}}\dif s=\int_0^{\infty} s^{-1/2}e^{-s}\dif s=\Gamma(1/2)=\sqrt{\pi},
\end{equation*}
which implies that 
\begin{equation*}
\lim_{N\to \infty}N^{\alpha}\left[N B_1 -\frac{\sigma^2}{\theta^{2}}\right]= 0 
\end{equation*}
for any $\alpha<1$.
This completes the proof. 

\subsubsection{Proof of identities \eqref{eq:QC2-mainText} and \eqref{eq:QND-mainText}}\label{proof:eq3-9-3-10}
As usual we denote by ${\rm{erf}}$ the error function  
$$
{\rm{erf}}(x)=\frac{2}{\sqrt{\pi}}\int_0^x e^{-y^2}\dif y.
$$

\noindent
\textit{Step I.} First we will prove that for a uniform partition  $\set{x_i \mid i=0,...,N}\subset[0,1]$, and $N\in \mathbb{N}$, for any $i>j$ and $k,\ell=1,2,\dots,M$, we have
\begin{align}
I:=\mathbb{E}\big(u_x(t_k,x_i) & -u_x(t_k,x_{i-1})\big)
\left(u_x(t_{\ell},x_j)-u_x(t_{\ell},x_{j-1})\right)\nonumber\\
&=\tau \sqrt{t_{\ell}}\varphi_{\frac{1}{4\theta t_{\ell}}}
		\left(\frac{1}{N},\frac{i-j}{N}\right)+\tau \sqrt{t_{k}}\varphi_{\frac{1}{4\theta t_{k}}}
		\left(\frac{1}{N},\frac{i-j}{N}\right) \nonumber\\
&		\quad -\sqrt{t_{\ell}+t_k}\tau \varphi_{\frac{1}{4\theta (t_{\ell}+t_k)}}\left(\frac{1}{N},\frac{i-j}{N}\right)
+\frac{\tau \sqrt{\pi}}{2\sqrt{\theta}}\phi_{(4\theta t_{\ell},4\theta t_k)}\left(\frac{1}{N},\frac{i-j}{N}\right), \label{eq:Euxk}
\end{align}
where, for $c,c_1,c_2,x,z> 0$, 
\begin{align}
\varphi_{c}(z,x)&:= \left[2e^{-cx^2} -e^{-c(x+z)^2}-e^{-c(x-z)^2} \right] \label{eq:phic}\\
\phi_{(c_1,c_2)}(z,x) & := 
2x\left({\rm{erf}}\left(\frac{x}{\sqrt{c_1}}\right) + {\rm{erf}}\left(\frac{x}{\sqrt{c_2}}\right)
-{\rm{erf}}\left(\frac{x}{\sqrt{c_1+c_2}}\right)-1 	\right) \nonumber\\
&\quad -(x+z)  \left({\rm{erf}}\left(\frac{x+z}{\sqrt{c_1}}\right) +{\rm{erf}}\left(\frac{x+z}{\sqrt{c_2}}\right) - {\rm{erf}}\left(\frac{x+z}{\sqrt{c_1+c_2}}\right) -1 \right)\nonumber\\
&\quad -(x-z)\left({\rm{erf}}\left(\frac{x-z}{\sqrt{c_1}}\right) + {\rm{erf}}\left(\frac{x-z}{\sqrt{c_2}}\right) - {\rm{erf}}\left(\frac{x-z}{\sqrt{c_1+c_2}}\right)-1
		\right).\label{eq:phic1c2}
\end{align}
Indeed, since for $i>j$, we have
\begin{align*}
x_i-x_j=x_{i-1}-x_{j-1}=(i-j)/N,\\
x_i-x_{j-1}=(i-j+1)/N,\quad x_{i-1}-x_j=(i-j-1)/N.
\end{align*}
then, for $i>j$, we obtain that 
\begin{align*}
I&=\tau\int_0^{t_{\ell}}\int_0^{t_k} \Bigg[\frac{e^{-\frac{(i-j)^2}{4\theta N^2(s_1+s_2)}}}{2(s_1+s_2)^{3/2}} \left(1-\frac{(i-j)^2}{2\theta N^2(s_1+s_2)}\right) 	-\frac{e^{-\frac{(i-j+1)^2}{4\theta N^2(s_1+s_2)}}}{4(s_1+s_2)^{3/2}} \left(1-\frac{(i-j+1)^2}{2\theta N^2(s_1+s_2)}\right)\\ 
& \quad -\frac{e^{-\frac{(i-j-1)^2}{4\theta N^2(s_1+s_2)}}}{4(s_1+s_2)^{3/2}} \left(1-\frac{(i-j-1)^2}{2\theta N^2(s_1+s_2)}\right)\Bigg] \dif s_1\dif s_2\\
& =: \bar{A}_1+\bar{A}_2+\bar{A}_3.
\end{align*}
Next we note that for $c,t_{\ell},t_k,x\geq 0$, in view of Lemma~\ref{lemma:intbyparts-1}, we get 	\begin{align*}
\int_0^{t_{\ell}}\int_0^{t_k}&\left((s_1+s_2)^{-3/2}e^{-\frac{cx^2}{s_1+s_2}}-2cx^2(s_1+s_2)^{-5/2}e^{-\frac{cx^2}{s_1+s_2}}\right)\dif s_1\dif s_2 \\
&=4\sqrt{t_{\ell}}e^{-\frac{cx^2}{t_{\ell}}}+4\sqrt{t_{k}}e^{-\frac{cx^2}{t_{k}}}-4\sqrt{t_{\ell}+t_k}e^{-\frac{cx^2}{t_{\ell}+t_k}} \\
&+4\sqrt{c}x\left(\int_{\frac{t_k}{cx^2}}^{\frac{t_{\ell}+t_k}{cx^2}} s^{-3/2}e^{-\frac{1}{s}}\dif s-\int_0^{\frac{t_{\ell}}{cx^2}} s^{-3/2}e^{-\frac{1}{s}}\dif s\right)\\
&=4\sqrt{t_{\ell}}e^{-\frac{cx^2}{t_{\ell}}}+4\sqrt{t_{k}}e^{-\frac{cx^2}{t_{k}}}-4\sqrt{t_{\ell}+t_k}e^{-\frac{cx^2}{t_{\ell}+t_k}} \\
&+4\sqrt{c\pi}x\left(\mbox{erf}\left(\frac{\sqrt{c}x}{\sqrt{t_{\ell}}}\right)+
\mbox{erf}\left(\frac{\sqrt{c}x}{\sqrt{t_k}}\right)-\mbox{erf}\left(\frac{\sqrt{c}x}{\sqrt{t_{\ell}+t_k}}\right)-1\right).
\end{align*}
From here, by direct computations, we obtain 
\begin{align*}
\bar{A}_1 & =2\tau\sqrt{t_{\ell}}e^{-\frac{(i-j)^2}{4\theta N^2t_{\ell}}}+ 2\tau\sqrt{t_{k}}e^{-\frac{(i-j)^2}{4\theta N^2t_{k}}} - 2\tau\sqrt{t_{\ell}+t_k}e^{-\frac{(i-j)^2}{4\theta N^2(t_{\ell}+t_k)}}\\
& \quad +\frac{\sqrt{\pi}(i-j)\tau}{\sqrt{\theta}N} \left(\mbox{erf}\left(\frac{i-j}{2\sqrt{\theta t_{\ell}}N}\right)+\mbox{erf}\left(\frac{i-j}{2\sqrt{\theta  t_{k}}N}\right) - \mbox{erf}\left(\frac{i-j}{2\sqrt{\theta (t_{\ell}+t_k)}N}\right) -1\right),\\
\bar{A}_2 & =-\tau\sqrt{t_{\ell}}e^{-\frac{(i-j+1)^2}{4\theta N^2t_{\ell}}} -\tau\sqrt{t_{k}}e^{-\frac{(i-j+1)^2}{4\theta N^2t_{k}}}
+\sqrt{t_{\ell}+t_k}\tau e^{-\frac{(i-j+1)^2}{4\theta N^2(t_{\ell}+t_k)}}\\
&\quad -\frac{\sqrt{\pi}(i-j+1)\tau}{2\sqrt{\theta} N}
\left(\mbox{erf}\left(\frac{i-j+1}{2\sqrt{\theta t_{\ell}}N}\right)+\mbox{erf}\left(\frac{i-j+1}{2\sqrt{\theta t_{k}}N}\right)-\mbox{erf}\left(\frac{i-j+1}{2\sqrt{\theta (t_{\ell}+t_k)}N}\right)-1\right),\\
\bar{A}_3 & =-\tau\sqrt{t_{\ell}}e^{-\frac{(i-j-1)^2}{4\theta N^2t_{\ell}}} -\tau\sqrt{t_{k}}e^{-\frac{(i-j-1)^2}{4\theta N^2t_{k}}} + \sqrt{t_{\ell}+t_k}\tau e^{-\frac{(i-j-1)^2}{4\theta N^2(t_{\ell}+t_k)}}\\
&\quad -\frac{\sqrt{\pi}(i-j-1)\tau}{2\sqrt{\theta}N} \left(\mbox{erf}\left(\frac{i-j-1}{2\sqrt{\theta t_{\ell}}N}\right)+\mbox{erf}\left(\frac{i-j-1}{2\sqrt{\theta t_{k}}N}\right)-\mbox{erf}\left(\frac{i-j-1}{2\sqrt{\theta (t_{\ell}+t_k)}N}\right)-1\right). 
\end{align*}
After combining the above, one gets \eqref{eq:Euxk}. 

\smallskip
\noindent
\textit{Step 2.} 	
Let 
\begin{align}
\Phi_N(i,t_{\ell},t_k)&:=\tau \sqrt{t_{\ell}}\varphi_{\frac{1}{4\theta t_{\ell}}}
\left(\frac{1}{N},\frac{i}{N}\right)+\tau \sqrt{t_{k}}\varphi_{\frac{1}{4\theta t_{k}}}
\left(\frac{1}{N},\frac{i}{N}\right)
-\sqrt{t_{\ell}+t_k}\tau \varphi_{\frac{1}{4\theta (t_{\ell}+t_k)}}\left(\frac{1}{N},\frac{i}{N}\right)\nonumber\\
&+\frac{\tau \sqrt{\pi}}{2\sqrt{\theta}}\phi_{(4\theta t_{\ell},4\theta t_k)}\left(\frac{1}{N},\frac{i}{N}\right), \label{eq:PhiN}
\end{align}
where $\phi_c$ and $\phi_{(c_1,c_2)}$ are defined in \eqref{eq:phic} and \eqref{eq:phic1c2}. 
Then, substituting \eqref{eq:Euxk} directly into the definitions of $Q_{2}$ and $Q_{ND}$, the identities \eqref{eq:QC2-mainText} and \eqref{eq:QND-mainText} follow at once. The proof is complete. 
	

\subsubsection{Proof of Theorem~\ref{th:CLT}}\label{proof:th:CLT}
By \eqref{eq:QN-multi}, and taking into account that $\cQ_{N,M}=\sqrt{1/2N}Q_{N,M}$, we deduce that  Malliavin derivative $\mathcal{D}\mathcal{Q}_{N,M}$ is given by
$$
\mathcal{D}\mathcal{Q}_{N,M}=\frac{2}{M\sqrt{2N}}\sum_{j=1}^M
\sum_{i=1}^N I_1^{t_j}\left(1_{[x_{i-1},x_i]}\right)1_{[x_{i-1},x_i]}.
$$
Hence, by the product formula for multiple integrals, we deduce 
\begin{align*}
\left\|\mathcal{D}\mathcal{Q}_{N,M}\right\|_{\mathcal{H}}^2 
& = \frac{2}{M^2N}\sum_{k,\ell=1}^M\left(\mathbb{E} U(t_k)\mathbb{E} U(t_{\ell})\right)^{-1/2} 
\sum_{i,j=1}^N  I_1^{t_k}\left(1_{[x_{i-1},x_i]}\right) I_1^{t_{\ell}}\left(1_{[x_{j-1},x_j]}\right)\\
& \qquad \times\mathbb{E}\left(u_x(t_k,x_i)-u_x(t_k,x_{i-1})\right)\left(u_x(t_{\ell},x_j) -u_x(t_{\ell},x_{j-1})\right)\\
& = \frac{2}{M^2N}\sum_{k,\ell=1}^M\left(\mathbb{E} U(t_k)\mathbb{E} U(t_{\ell})\right)^{-1/2} 
\sum_{i,j=1}^N I_2^{t_k,t_{\ell}}\left(1_{[x_{i-1},x_i]}\otimes 1_{[x_{j-1},x_j]}\right)\\
&\qquad \times \mathbb{E}\left(u_x(t_k,x_i)-u_x(t_k,x_{i-1})\right)\left(u_x(t_{\ell},x_j) -u_x(t_{\ell},x_{j-1})\right)+\mathbb{E}\|\mathcal{D}\mathcal{Q}_{N,M}\|_{\mathcal{H}}^2.
\end{align*}
Let 
\begin{align*}
\mathcal{R}_{N,M} & :=\|\mathcal{D}\mathcal{Q}_{N,M}\|_{\mathcal{H}}^2-\mathbb{E}\|\mathcal{D}\mathcal{Q}_{N,M}\|_{\mathcal{H}}^2\\
& = \frac{2}{{{M^2N}}}\sum_{k,\ell=1}^M\left(\mathbb{E} U(t_k)\mathbb{E} U(t_{\ell})\right)^{{-1/2}}  
 \sum_{i,j=1}^N I_2^{t_k,t_{\ell}}\left(1_{[x_{i-1},x_i]}\otimes 1_{[x_{j-1},x_j]}\right)\\
& \qquad \times \mathbb{E}\left(u_x(t_k,x_i)-u_x(t_k,x_{i-1})\right) \left(u_x(t_{\ell},x_j)-u_x(t_{\ell},x_{j-1})\right).
\end{align*}
Then, in view of the above and by Theorem \ref{MSthm}, we have
\begin{equation}\label{eq:dTV3}
d_{TV}\left(\mathcal{Q}_{N,M}, \mathcal{N}(0,1)\right)\leq C_1\left(\mathbb{E}\mathcal{R}_{N,M}^2\right)^{1/2}, \end{equation}
for some $C_1>0$. Thus, we finally compute
\begin{align*}
\mathbb{E}\mathcal{R}^2_{N,M}= \frac{8}{{{M^4N^2}}}&\sum_{i',j',k',\ell'=1}^M\sum_{i,j,k,\ell=1}^N \left(\mathbb{E} U(t_{i'})\mathbb{E} U(t_{j'})\mathbb{E} U(t_{k'})\mathbb{E} U(t_{\ell'})\right)^{-1}\\
&\times \mathbb{E}\left(u_x(t_{i'},x_i)-u_x(t_{i'},x_{i-1})\right)\left(u_x(t_{k'},x_k)-u_x(t_{k'},x_{k-1})\right)\\
&\times \mathbb{E}\left(u_x(t_{j'},x_j)-u_x(t_{j'},x_{j-1})\right)\left(u_x(t_{\ell'},x_{\ell})-u_x(t_{\ell'},x_{\ell-1})\right)\\
&\times \mathbb{E}\left(u_x(t_{i'},x_i)-u_x(t_{i'},x_{i-1})\right)\left(u_x(t_{j'},x_j)-u_x(t_{j'},x_{j-1})\right)\\
&\times \mathbb{E}\left(u_x(t_{k'},x_k)-u_x(t_{k'},x_{k-1})\right)\left(u_x(t_{\ell'},x_{\ell})-u_x(t_{\ell'},x_{\ell-1})\right).
\end{align*}
Note that
\begin{align*}
\mathbb{E}\mathcal{R}^2_{N,M}&=\frac{8}{{{M^4N^2}}}
\sum_{i',j',k',\ell'=1}^M\sum_{i,j,k,\ell=1}^N
\left(\mathbb{E} U(t_{i'})\mathbb{E} U(t_{j'})\mathbb{E} U(t_{k'})\mathbb{E} U(t_{\ell'})\right)^{-1}\\
&\times \Phi_N\left(|i-k|,t_{i'},t_{k'}\right)\Phi_N\left(|j-l|,t_{j'},t_{\ell'}\right) 
\Phi_N\left(|i-j|,t_{i'},t_{j'}\right)\Phi_N\left(|k-l|,t_{k'},t_{j'}\right).
\end{align*}
Since, by \eqref{Phi-taylor-1}, for all $i=1,2,\dots,N$ and $k,\ell=1,\dots,M$, 
$$
\Phi_N(i,t_k,t_{\ell})\leq \frac{C_2}{\sqrt{t_1}N^{2}}
\left(\frac{(i+1)^2}{t_1N^{2}}+1\right), 
$$
we immediately conclude that 
$$
\mathbb{E}\mathcal{R}^2_N \leq \frac{C_3}{N^2}.
$$
This, combined with \eqref{eq:dTV3} concludes the proof. 

\subsubsection{Proof of \eqref{eq:AsympDu}}\label{proof:eq:AsympDu}

By Lemma~\ref{lemma:uhDuhproducs}, for $t>0$, $x\in \mathbb{R}$ and $N\geq 1$, we get
\begin{align}
B_2&:=\mathbb{E}\left|\Delta_{\frac{1}{N^{\gamma}}}u\left(t,x+\frac{1}{N}\right)-\Delta_{\frac{1}{N^{\gamma}}}u(t,x)\right|^2\nonumber\\
& = \frac{2\tau \theta N^{2\gamma}}{(a+b)^2}\int_0^t\int_0^t 
(s_1+s_2)^{-1/2}\left(1-e^{-\frac{(a+b)^2}{4\theta N^{2\gamma}(s_1+s_2)}}\right)
\dif s_1\dif s_2\nonumber\\
&\qquad +\frac{\tau \theta N^{2\gamma}}{(a+b)^2}\int_0^t\int_0^t 
(s_1+s_2)^{-1/2}\left(e^{-\frac{(a+b+N^{\gamma-1})^2}{4\theta N^{2\gamma}(s_1+s_2)}}-e^{-\frac{1}{4\theta N^2(s_1+s_2)}}\right)
\dif s_1\dif s_2\nonumber\\
&\qquad +\frac{\tau \theta N^{2\gamma}}{(a+b)^2}\int_0^t\int_0^t 
(s_1+s_2)^{-1/2}\left(e^{-\frac{(a+b-N^{\gamma-1})^2}{4\theta N^{2\gamma}(s_1+s_2)}}-e^{-\frac{1}{4\theta N^2(s_1+s_2)}}\right)
\dif s_1\dif s_2 \nonumber \\
&=:\widetilde{A}_1+\widetilde{A}_2+\widetilde{A}_3. \label{eq:EDhu2}
\end{align}
We deal with each term in \eqref{eq:EDhu2} by applying consequently Lemma \ref{lemma:intbyparts-1}, and write 
\begin{align}\label{eq:EDuh-1}
\widetilde{A}_1&=\frac{8\tau\theta N^{2\gamma}}{3(a+b)^2}(2t)^{3/2}\left(1-e^{-\frac{(a+b)^2}{8\theta N^{2\gamma}t}}\right)
-\frac{16\tau\theta N^{2\gamma}}{3(a+b)^2}t^{3/2}\left(1-e^{-\frac{(a+b)^2}{4\theta N^{2\gamma}t}}\right)
-\frac{8\tau}{3}(2t)^{1/2}e^{-\frac{(a+b)^2}{8\theta N^{2\gamma}t}}\nonumber\\
& \quad +\frac{16\tau}{3}t^{1/2}e^{-\frac{(a+b)^2}{4\theta N^{2\gamma}t}}
+\frac{4\tau(a+b)}{3\sqrt{\theta}N^{\gamma}}\left[\int_{\frac{4\theta t N^{2\gamma}}{(a+b)^2}}^{\frac{8\theta t N^{2\gamma}}{(a+b)^2}}
s^{-3/2}e^{-1/s}\dif s-\int_0^{\frac{4\theta t N^{2\gamma}}{(a+b)^2}}s^{-3/2}e^{-1/s}\dif s
\right]\nonumber\\
& \quad +\frac{\tau (a+b)}{\sqrt{\theta}N^{\gamma}}\int_0^{\frac{4\theta t N^{2\gamma}}{(a+b)^2}}\int_0^{\frac{4\theta t N^{2\gamma}}{(a+b)^2}} (s_1+s_2)^{-5/2}e^{-\frac{1}{s_1+s_2}}\dif s_1\dif s_2,
\end{align}
\begin{align}\label{eq:EDuh-2}
\widetilde{A}_2&=\frac{4\tau\theta N^{2\gamma}}{3(a+b)^2}(2t)^{3/2}\left(e^{-\frac{(1+(a+b)N^{1-\gamma})^2}{8\theta N^{2}t}}-e^{-\frac{1}{8\theta N^2t}}\right)
-\frac{8\tau\theta N^{2\gamma}}{3(a+b)^2}t^{3/2}\left(e^{-\frac{(1+(a+b)N^{1-\gamma})^2}{4\theta N^{2}t}}-e^{-\frac{1}{4\theta N^2t}}\right) \nonumber\\
&\quad +\frac{4\tau (1+(a+b)N^{1-\gamma})^2N^{2\gamma-2}}{3(a+b)^2}(2t)^{1/2}e^{-\frac{(1+(a+b)N^{1-\gamma})^2}{8\theta N^{2}t}}-\frac{4\tau N^{2\gamma-2}}{3(a+b)^2}(2t)^{1/2}e^{-\frac{1}{8\theta N^2t}} \nonumber\\
&\quad -\frac{8\tau (1+(a+b)N^{1-\gamma})^2N^{2\gamma-2}}{3(a+b)^2}t^{1/2}e^{-\frac{(1+(a+b)N^{1-\gamma})^2}{4\theta N^{2}t}}+\frac{8\tau N^{2\gamma-2}}{3(a+b)^2}t^{1/2}e^{-\frac{1}{4\theta N^2t}} \nonumber\\
& \quad - \frac{2\tau(1+(a+b)N^{1-\gamma})^3}{3(a+b)^2\sqrt{\theta}N^{3-2\gamma}}\left[\int_{\frac{4\theta t N^{2}}{(1+(a+b)N^{1-\gamma})^2}}^{\frac{8\theta t N^{2}}{(1+(a+b)N^{1-\gamma})^2}}
s^{-3/2}e^{-1/s}\dif s-\int_0^{\frac{4\theta t N^{2}}{(1+(a+b)N^{1-\gamma})^2}}s^{-3/2}e^{-1/s}\dif s
\right]\nonumber\\
&\quad +\frac{2\tau}{3(a+b)^2\sqrt{\theta} N^{3-2\gamma}}\left[\int_{4\theta t N^2}^{8\theta t N^2}
s^{-3/2}e^{-1/s}\dif s-\int_0^{4\theta t N^2}s^{-3/2}e^{-1/s}\dif s
\right]\nonumber\\
&\quad -\frac{\tau (1+(a+b)N^{1-\gamma})^3}{2(a+b)^2\sqrt{\theta}N^{3-2\gamma}}\int_0^{\frac{4\theta t N^{2}}{(1+(a+b)N^{1-\gamma})^2}}\int_0^{\frac{4\theta t N^{2}}{(1+(a+b)N^{1-\gamma})^2}}
(s_1+s_2)^{-5/2}e^{-\frac{1}{s_1+s_2}}\dif s_1\dif s_2\nonumber\\
& \quad +\frac{\tau}{2(a+b)^2\sqrt{\theta} N^{3-2\gamma}}\int_0^{4\theta t N^2}\int_0^{4\theta t N^2}
(s_1+s_2)^{-5/2}e^{-\frac{1}{s_1+s_2}}\dif s_1\dif s_2,
\end{align}
and
\begin{align}\label{eq:EDuh-3}
\widetilde{A}_3&=\frac{4\tau\theta N^{2\gamma}}{3(a+b)^2}(2t)^{3/2}\left(e^{-\frac{(1-(a+b)N^{1-\gamma})^2}{8\theta N^{2}t}}-e^{-\frac{1}{8\theta N^2t}}\right)
-\frac{8\tau\theta N^{2\gamma}}{3(a+b)^2}t^{3/2}\left(e^{-\frac{(1-(a+b)N^{1-\gamma})^2}{4\theta N^{2}t}}-e^{-\frac{1}{4\theta N^2t}}\right) \nonumber\\
&\quad +\frac{4\tau (1-(a+b)N^{1-\gamma})^2N^{2\gamma-2}}{3(a+b)^2}(2t)^{1/2}e^{-\frac{(1-(a+b)N^{1-\gamma})^2}{8\theta N^{2}t}}-\frac{4\tau N^{2\gamma-2}}{3(a+b)^2}(2t)^{1/2}e^{-\frac{1}{8\theta N^2t}} \nonumber\\
&\quad -\frac{8\tau (1-(a+b)N^{1-\gamma})^2N^{2\gamma-2}}{3(a+b)^2}t^{1/2}e^{-\frac{(1-(a+b)N^{1-\gamma})^2}{4\theta N^{2}t}}+\frac{8\tau N^{2\gamma-2}}{3(a+b)^2}t^{1/2}e^{-\frac{1}{4\theta N^2t}} \nonumber\\
& \quad - \frac{2\tau\left|1-(a+b)N^{1-\gamma}\right|^3}{3(a+b)^2\sqrt{\theta}N^{3-2\gamma}}\left[\int_{\frac{4\theta t N^{2}}{(1-(a+b)N^{1-\gamma})^2}}^{\frac{8\theta t N^{2}}{(1-(a+b)N^{1-\gamma})^2}}
s^{-3/2}e^{-1/s}\dif s-\int_0^{\frac{4\theta t N^{2}}{(1-(a+b)N^{1-\gamma})^2}}s^{-3/2}e^{-1/s}\dif s
\right]\nonumber\\
&\quad +\frac{2\tau}{3(a+b)^2\sqrt{\theta}N^{3-2\gamma}}\left[\int_{4\theta t N^2}^{8\theta t N^2}
s^{-3/2}e^{-1/s}\dif s-\int_0^{4\theta t N^2}s^{-3/2}e^{-1/s}\dif s
\right]\nonumber\\
&\quad -\frac{\tau \left|1-(a+b)N^{1-\gamma}\right|^3}{2(a+b)^2\sqrt{\theta}N^{3-2\gamma}}\int_0^{\frac{4\theta t N^{2}}{(1-(a+b)N^{1-\gamma})^2}}\int_0^{\frac{4\theta t N^{2}}{(1-(a+b)N^{1-\gamma})^2}}
(s_1+s_2)^{-5/2}e^{-\frac{1}{s_1+s_2}}\dif s_1\dif s_2\nonumber\\
& \quad +\frac{\tau}{2(a+b)^2\sqrt{\theta}N^{3-2\gamma}}\int_0^{4\theta t N^2}\int_0^{4\theta t N^2}
(s_1+s_2)^{-5/2}e^{-\frac{1}{s_1+s_2}}\dif s_1\dif s_2.
\end{align}
Using the asymptotics 
$$
\left(1-e^{-\frac{\alpha}{N^{2}}}\right) \simeq \alpha N^{-2}, \quad 
\left(e^{-\frac{\alpha}{N^{2}}}-e^{-\frac{\beta}{N^{2}}}\right)\simeq
(\beta-\alpha)N^{-2},
$$
with $\beta>\alpha$, and sufficiently large $N$,  we continue 
\begin{align*}
B_2 &\simeq
\frac{1}{N^{\gamma}}\left[-\frac{4\tau(a+b)}{3\sqrt{\theta}}\int_0^{\infty} s^{-3/2}e^{-1/s}\dif s
+\frac{\tau (a+b)}{\sqrt{\theta}}\int_0^{\infty}\int_0^{\infty}
(s_1+s_2)^{-5/2}e^{-\frac{1}{s_1+s_2}}\dif s_1\dif s_2\right]\\
&+\frac{1}{N^{3-2\gamma}}\Bigg[\frac{2\tau(1+(a+b)N^{1-\gamma})^3}{3(a+b)^2\sqrt{\theta}}\int_0^{\infty}s^{-3/2}e^{-1/s}\dif s
+\frac{2\tau\left|1-(a+b)N^{1-\gamma}\right|^3}{3(a+b)^2\sqrt{\theta}}\int_0^{\infty}s^{-3/2}e^{-1/s}\dif s\\
&-\frac{\tau (1+(a+b)N^{1-\gamma})^3}{2(a+b)^2\sqrt{\theta}}\int_0^{\infty}\int_0^{\infty}
(s_1+s_2)^{-5/2}e^{-\frac{1}{s_1+s_2}}\dif s_1\dif s_2\nonumber\\
&-\frac{\tau \left|1-(a+b)N^{1-\gamma}\right|^3}{2(a+b)^2\sqrt{\theta}}\int_0^{\infty}\int_0^{\infty}
(s_1+s_2)^{-5/2}e^{-\frac{1}{s_1+s_2}}\dif s_1\dif s_2\nonumber\\
&-\frac{4\tau}{3(a+b)^2\sqrt{\theta}}\int_0^{\infty}s^{-3/2}e^{-1/s}\dif s
+\frac{\tau}{(a+b)^2\sqrt{\theta}}\int_0^{\infty}\int_0^{\infty}
(s_1+s_2)^{-5/2}e^{-\frac{1}{s_1+s_2}}\dif s_1\dif s_2
\Bigg], 
\end{align*}
for large  $N$.
From here, using that 
$$
\int_0^{\infty} s^{-3/2}e^{-1/s}\dif s=\sqrt{\pi}  \quad
\mbox{and}\quad \int_0^{\infty}\int_0^{\infty} (s_1+s_2)^{-5/2}e^{-\frac{1}{s_1+s_2}}\dif s_1\dif s_2=\sqrt{\pi},
$$
the identity \eqref{eq:AsympDu} follows. 


\subsubsection{Proof of ~\eqref{eq:LimTildeQC2}-\eqref{eq:dTV2}}\label{proof:eqs:tildeQE-CLT1}
Without loss of generality, for computational simplicity, we assume that $a+b=1$.
Using Lemma~\ref{lemma:intbyparts-1}, split function $F$ defined in \eqref{eq:tildeQND} as
$$
F(i,t_k,t_{\ell})=:F_1(i,t_k,t_{\ell})+F_2(i,t_k,t_{\ell})+F_3(i,t_k,t_{\ell})+F_4(i,t_k,t_{\ell}),
$$
where
\begin{align*}
F_1(i,t_k,t_{\ell}) & 
=\frac{4}{3}(t_k+t_{\ell})^{3/2}\left[2e^{-\frac{i^2}{4\theta N^2(t_k+t_{\ell})}} -
e^{-\frac{(i+N^{1-\gamma})^2}{4\theta N^2(t_k+t_{\ell})}}-e^{-\frac{(i-N^{1-\gamma})^2}{4\theta N^2(t_k+t_{\ell})}}\right]\\
& -\frac{4}{3}t_k^{3/2}\left[2e^{-\frac{i^2}{4\theta N^2t_k}}-
e^{-\frac{(i+N^{1-\gamma})^2}{4\theta N^2t_k}}-e^{-\frac{(i-N^{1-\gamma})^2}{4\theta N^2t_k}}\right]\\
&-\frac{4}{3}t_{\ell}^{3/2}\left[2e^{-\frac{i^2}{4\theta N^2t_{\ell}}}-
e^{-\frac{(i+N^{1-\gamma})^2}{4\theta N^2t_{\ell}}}-e^{-\frac{(i-N^{1-\gamma})^2}{4\theta N^2t_{\ell}}}\right]
\end{align*}
\begin{align*}
F_2(i,t_k,t_{\ell})&=\frac{8i^2}{3\theta N^2}(t_k+t_{\ell})^{1/2}e^{-\frac{i^2}{4\theta N^2(t_k+t_{\ell})}}
-\frac{4(i+N^{1-\gamma})^2}{3\theta N^2}(t_k+t_{\ell})^{1/2}e^{-\frac{(i+N^{1-\gamma})^2}{4\theta N^2(t_k+t_{\ell})}}\\
&-\frac{4(i-N^{1-\gamma})^2}{3\theta N^2}(t_k+t_{\ell})^{1/2}e^{-\frac{(i-N^{1-\gamma})^2}{4\theta N^2(t_k+t_{\ell})}}
-\frac{8i^2}{3\theta N^2}t_k^{1/2}e^{-\frac{i^2}{4\theta N^2t_k}}\\
&+\frac{4(i+N^{1-\gamma})^2}{3\theta N^2}t_k^{1/2}e^{-\frac{(i+N^{1-\gamma})^2}{4\theta N^2t_k}}
+\frac{4(i-N^{1-\gamma})^2}{3\theta N^2}t_k^{1/2}e^{-\frac{(i-N^{1-\gamma})^2}{4\theta N^2t_k}}
-\frac{8i^2}{3\theta N^2}t_{\ell}^{1/2}e^{-\frac{i^2}{4\theta N^2t_{\ell}}}\\
&+\frac{4(i+N^{1-\gamma})^2}{3\theta N^2}t_{\ell}^{1/2}e^{-\frac{(i+N^{1-\gamma})^2}{4\theta N^2t_{\ell}}}
+\frac{4(i-N^{1-\gamma})^2}{3\theta N^2}t_{\ell}^{1/2}e^{-\frac{(i-N^{1-\gamma})^2}{4\theta N^2t_{\ell}}},
\end{align*}
\begin{align*}
F_3(i,t_k,t_{\ell})&=-\frac{4i^3}{3\theta^{3/2}N^3}
\left[\int_{\frac{4\theta N^2 t_{\ell}}{i^2}}^{\frac{4\theta N^2 (t_k+t_{\ell})}{i^2}}
s^{-3/2}e^{-1/s}\dif s-\int_0^{\frac{4\theta N^2 t_k}{i^2}}s^{-3/2}e^{-1/s}\dif s\right]\\
&+\frac{2(i+N^{1-\gamma})^3}{3\theta^{3/2}N^3}\left[\int_{\frac{4\theta N^2 t_{\ell}}{(i+N^{1-\gamma})^2}}^{\frac{4\theta N^2 (t_k+t_{\ell})}{(i+N^{1-\gamma})^2}}
s^{-3/2}e^{-1/s}\dif s-\int_0^{\frac{4\theta N^2 t_k}{(i+N^{1-\gamma})^2}}s^{-3/2}e^{-1/s}\dif s\right]\\
&+\frac{2(i-N^{1-\gamma})^3}{3\theta^{3/2}N^3}\left[\int_{\frac{4\theta N^2 t_{\ell}}{(i-N^{1-\gamma})^2}}^{\frac{4\theta N^2 (t_k+t_{\ell})}{(i-N^{1-\gamma})^2}}
s^{-3/2}e^{-1/s}\dif s-\int_0^{\frac{4\theta N^2 t_k}{(i-N^{1-\gamma})^2}}s^{-3/2}e^{-1/s}\dif s\right],
\end{align*}
\begin{align*}
F_4(i,t_k,t_{\ell})&=-\frac{i^3}{\theta^{3/2}N^3}
\int_0^{\frac{4\theta N^2 t_k}{i^2}}\int_0^{\frac{4\theta N^2 t_{\ell}}{i^2}}
(s_1+s_2)^{-5/2}e^{-\frac{1}{s_1+s_2}}\dif s_2\dif s_2\\
&+\frac{(i+N^{1-\gamma})^3}{2\theta^{3/2}N^3}\int_0^{\frac{4\theta N^2 t_k}{(i+N^{1-\gamma})^2}}
\int_0^{\frac{4\theta N^2 t_{\ell}}{(i+N^{1-\gamma})^2}}
(s_1+s_2)^{-5/2}e^{-\frac{1}{s_1+s_2}}\dif s_2\dif s_2\\
&+\frac{(i-N^{1-\gamma})^3}{2\theta^{3/2}N^3}\int_0^{\frac{4\theta N^2 t_k}{(i-N^{1-\gamma})^2}}
\int_0^{\frac{4\theta N^2 t_{\ell}}{(i-N^{1-\gamma})^2}}
(s_1+s_2)^{-5/2}e^{-\frac{1}{s_1+s_2}}\dif s_2\dif s_2.
\end{align*}
Similarly to \eqref{phi-taylor}, one can show that,  for each $i,k,\ell$,
\begin{equation*}
\left|2F_j(i,t_k,t_{\ell})-F_j(i+1,t_k,t_{\ell})-F_j(i-1,t_k,t_{\ell})\right|\leq \frac{C_1}{N^{2\gamma+2}},\quad j=1,2,3,4,
\end{equation*}
which imply that 
\begin{equation}\label{eq:tilPhibdd}
\left|2F(i,t_k,t_{\ell})-F(i+1,t_k,t_{\ell})-F(i-1,t_k,t_{\ell})\right|\leq \frac{C_1}{N^{2\gamma+2}}, 
\quad \mbox{and}\quad
\left|\widetilde{\Phi}_N(i,,t_k,t_{\ell})\right|\leq \frac{C_2}{N^2}.
\end{equation}
Using these, \eqref{eq:LimTildeQC2}, \eqref{eq:LimTildeQND1}, and \eqref{eq:LimTildeQND2} clearly follow. 

Next we will prove \eqref{eq:dTV2}, starting with computation of the  Malliavin derivative of 
$\mathcal{D}\widetilde{\mathcal{Q}}_{N,M}$
$$
\mathcal{D}\widetilde{\mathcal{Q}}_{N,M}=\frac{2}{M\sqrt{2N}}\sum_{j=1}^M\sum_{i=1}^{N} \widetilde{I}^{t_j}_1\left(1_{[x_{i-1},x_i]}\right)1_{[x_{i-1},x_i]}.
$$
Therefore,
\begin{align*}
\left\|\mathcal{D}\widetilde{\mathcal{Q}}_{N,M}\right\|_{\widetilde{\mathcal{H}}}^2&=
\frac{2}{NM^2}\sum_{k,\ell=1}^M\left(\mathbb{E}\widetilde{U}(t_k)\mathbb{E}\widetilde{U}(t_{\ell})\right)^{{-1/2}} 
\sum_{i,j=1}^{N} \widetilde{I}^{t_k}_1\left(1_{[x_{i-1},x_i]}\right) \widetilde{I}^{t_{\ell}}_1\left(1_{[x_{j-1},x_j]}\right)\\
&\quad \times \mathbb{E}\left(\Delta_{\frac{1}{N^{\gamma}}}u(t_k,x_i)-\Delta_{\frac{1}{N^{\gamma}}}u(t_k,x_{i-1})\right)
\left(\Delta_{\frac{1}{N^{\gamma}}}u(t_{\ell},x_j)-\Delta_{\frac{1}{N^{\gamma}}}u(t_{\ell},x_{j-1})\right)\\
&=\frac{2}{NM^2}\sum_{k,\ell=1}^M\left(\mathbb{E}\widetilde{U}(t_k)\mathbb{E}\widetilde{U}(t_{\ell})\right)^{{-1/2}} 
\sum_{i,j=1}^{N}  \widetilde{I}^{t_k,t_{\ell}}_2\left(1_{[x_{i-1},x_i]}\otimes 1_{[x_{j-1},x_j]}\right) \\
&\quad\times \mathbb{E}\left(\Delta_{\frac{1}{N^{\gamma}}}u(t_k,x_i)-\Delta_{\frac{1}{N^{\gamma}}}u(t_k,x_{i-1})\right)
\left(\Delta_{\frac{1}{N^{\gamma}}}u(t_{\ell},x_j)-\Delta_{\frac{1}{N^{\gamma}}}u(t_{\ell},x_{j-1})\right) \\ 
& \qquad +\mathbb{E}\left\|\mathcal{D}\widetilde{\mathcal{Q}}_{N,M}\right\|_{\widetilde{\mathcal{H}}}^2.
\end{align*}
Then,
\begin{align*}
\widetilde{\mathcal{R}}_{N,M}&:=\left\|\mathcal{D}\widetilde{\mathcal{Q}}_{N,M}\right\|_{\widetilde{\mathcal{H}}}^2-\mathbb{E}\left\|\mathcal{D}\widetilde{\mathcal{Q}}_{N,M}\right\|_{\widetilde{\mathcal{H}}}^2\\
&=\frac{2}{NM^2}\sum_{k,\ell=1}^M\left(\mathbb{E}\widetilde{U}(t_k)\mathbb{E}\widetilde{U}(t_{\ell})\right)^{{-1/2}} 
\sum_{i,j=1}^{N}  \widetilde{I}^{t_k,t_{\ell}}_2\left(1_{[x_{i-1},x_i]}\otimes 1_{[x_{j-1},x_j]}\right) \\
& \quad \times \mathbb{E}\left(\Delta_{\frac{1}{N^{\gamma}}}u(t_k,x_i)-\Delta_{\frac{1}{N^{\gamma}}}u(t_k,x_{i-1})\right)
\left(\Delta_{\frac{1}{N^{\gamma}}}u(t_{\ell},x_j)-\Delta_{\frac{1}{N^{\gamma}}}u(t_{\ell},x_{j-1})\right),
\end{align*}
and
\begin{align*}
\mathbb{E}\widetilde{\mathcal{R}}^2_{N,M}&
=\frac{8}{N^2M^4}\sum_{i',j',k',\ell'=1}^M \sum_{i,j,k,l=1}^{N} \left(
\mathbb{E}\widetilde{U}(t_{i'})\mathbb{E}\widetilde{U}(t_{j'})\mathbb{E}\widetilde{U}(t_{k'})
\mathbb{E}\widetilde{U}(t_{\ell'})\right)^{-1}\nonumber\\
&\times \mathbb{E}\left(\Delta_{\frac{1}{N^{\gamma}}}u(t_{i'},x_i)-\Delta_{\frac{1}{N^{\gamma}}}u(t_{i'},x_{i-1})\right)
\left(\Delta_{\frac{1}{N^{\gamma}}}u(t_{k'},x_k)-\Delta_{\frac{1}{N^{\gamma}}}u(t_{k'},x_{k-1})\right)\nonumber\\
&\times \mathbb{E}\left(\Delta_{\frac{1}{N^{\gamma}}}u(t_{j'},x_j)-\Delta_{\frac{1}{N^{\gamma}}}u(t_{j'},x_{j-1})\right)
\left(\Delta_{\frac{1}{N^{\gamma}}}u(t_{\ell'},x_{\ell})-\Delta_{\frac{1}{N^{\gamma}}}u(t_{\ell'},x_{\ell-1})\right)
\nonumber\\
&\times \mathbb{E}\left(\Delta_{\frac{1}{N^{\gamma}}}u(t_{i'},x_i)-\Delta_{\frac{1}{N^{\gamma}}}u(t_{i'},x_{i-1})\right)
\left(\Delta_{\frac{1}{N^{\gamma}}}u(t_{j'},x_j)-\Delta_{\frac{1}{N^{\gamma}}}u(t_{j'},x_{j-1})\right)
\nonumber\\
&\times \mathbb{E}\left(\Delta_{\frac{1}{N^{\gamma}}}u(t_{k'},x_k)-\Delta_{\frac{1}{N^{\gamma}}}u(t_{k'},x_{k-1})\right)
\left(\Delta_{\frac{1}{N^{\gamma}}}u(t_{\ell'},x_{\ell})-\Delta_{\frac{1}{N^{\gamma}}}u(t_{\ell'},x_{\ell-1})\right).
\end{align*}
Consequently, 
\begin{align*}
\mathbb{E}\widetilde{\mathcal{R}}^2_{N,M} & 
=\frac{8}{N^2M^4}\sum_{i',j',k',\ell'=1}^M \sum_{i,j,k,l=1}^{N} \left(
\mathbb{E}\widetilde{U}(t_{i'})\mathbb{E}\widetilde{U}(t_{j'})\mathbb{E}\widetilde{U}(t_{k'})
\mathbb{E}\widetilde{U}(t_{\ell'})\right)^{-1}\\
& \quad \times\widetilde{\Phi}_N(|i-k|,t_{i'},t_{k'})\widetilde{\Phi}_N(|j-\ell|,t_{j'},t_{\ell'})
\widetilde{\Phi}_N(|i-j|,t_{i'},t_{j'})\widetilde{\Phi}_N(|k-l|,t_{k'},t_{j'}).
\end{align*}
Note that by \eqref{eq:tilPhibdd}, for each $i,k,\ell$,
\begin{align*}
\widetilde{\Phi}_N(i,t_k,t_{\ell})\leq \frac{C_1}{N^2}, \quad
\mathbb{E}\widetilde{\mathcal{R}}^2_{N,M}
\leq \frac{C_2}{N^2}, 
\end{align*}
for some $C_1,C_2>0$. Therefore, in view of Theorem~\ref{MSthm},
\begin{equation*}
d_{TV}\left(\widetilde{\mathcal{Q}}_{N,M}, \mathcal{N}(0,1)\right)\leq C_3\left(\mathbb{E}\widetilde{\mathcal{R}}_{N,M}^2\right)^{1/2}\leq \frac{C_4}{N}, 
\end{equation*}
for some $C_3,C_4>0$. This concludes the proof. 

\subsection{Auxiliary results}

\begin{lemma}\label{lemma:intbyparts-1}
Let $c,t,t'\geq 0$. Then,
\begin{align*}
&\int_0^{t'}\int_0^t (s_1+s_2)^{-1/2}e^{-\frac{c}{s_1+s_2}}\dif s_1\dif s_2=\frac{4}{3}
\left((t'+t)^{3/2}e^{-\frac{c}{t'+t}}-(t')^{3/2}e^{-\frac{c}{t'}}-t^{3/2}e^{-\frac{c}{t}}\right)\\
&\  +\frac{16c}{3}\left((t'+t)^{1/2}e^{-\frac{c}{t'+t}}-(t')^{1/2}e^{-\frac{c}{t'}}-t^{1/2}e^{-\frac{c}{t}}\right)
-\frac{16c^{3/2}}{3}\left(\int_{\frac{t}{c}}^{\frac{t'+t}{c}} s^{-3/2}e^{-\frac{1}{s}}\dif s-\int_0^{\frac{t'}{c}} s^{-3/2}e^{-\frac{1}{s}}\dif s
\right)\\
& \quad  -4c^{3/2}\int_0^{\frac{t'}{c}}\int_0^{\frac{t}{c}} (s_1+s_2)^{-5/2} e^{-\frac{1}{s_1+s_2}}\dif s_1\dif s_2,
\end{align*}
and
\begin{align*}
&\int_0^t\int_0^t (s_1+s_2)^{-3/2}e^{-\frac{c}{s_1+s_2}}\dif s_1\dif s_2=
4\left((t')^{1/2}e^{-\frac{c}{t'}}+t^{1/2}e^{-\frac{c}{t}}-(t'+t)^{1/2}e^{-\frac{c}{t'+t}}\right)\\
& \qquad +4c^{1/2}\left(\int_{\frac{t}{c}}^{\frac{t'+t}{c}} s^{-3/2}e^{-\frac{1}{s}}
-\int_0^{\frac{t'}{c}} s^{-3/2}e^{-\frac{1}{s}}
\right)+2c^{1/2}\int_0^{\frac{t'}{c}}\int_0^{\frac{t}{c}} (s_1+s_2)^{-5/2}e^{-\frac{1}{s_1+s_2}}\dif s_1\dif s_2.
\end{align*}
\end{lemma}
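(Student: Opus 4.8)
The plan is to reduce both identities to the evaluation of the single family of double integrals
\[
I_\alpha := \int_0^{t'}\!\int_0^t (s_1+s_2)^{-\alpha}\,e^{-\frac{c}{s_1+s_2}}\,\dif s_1\,\dif s_2,\qquad \alpha\in\{\tfrac12,\tfrac32\},
\]
and to compute $I_{1/2}$ and $I_{3/2}$ by repeated integration by parts. The degenerate cases $c=0$, $t=0$ or $t'=0$ are immediate (both sides vanish or reduce by continuity), so assume $c,t,t'>0$. Since the integrand depends only on $s_1+s_2$, for any function $\Phi_\alpha$ with $\Phi_\alpha''(r)=r^{-\alpha}e^{-c/r}$ we have $\partial_{s_1}\partial_{s_2}\Phi_\alpha(s_1+s_2)=(s_1+s_2)^{-\alpha}e^{-c/(s_1+s_2)}$, hence by Fubini $I_\alpha=\Phi_\alpha(t+t')-\Phi_\alpha(t)-\Phi_\alpha(t')+\Phi_\alpha(0)$. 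It therefore suffices to exhibit $\Phi_{1/2}$ and $\Phi_{3/2}$ explicitly.

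Write $\mathcal{E}(X):=\int_0^X s^{-3/2}e^{-1/s}\,\dif s$, which is finite since $e^{-1/s}$ dominates the singularity at $s=0$. A single integration by parts, after the rescaling $s=r/c$, gives the two building blocks $\int r^{-3/2}e^{-c/r}\,\dif r = c^{-1/2}\mathcal{E}(r/c)$ and $\int r^{-1/2}e^{-c/r}\,\dif r = 2r^{1/2}e^{-c/r}-2c^{1/2}\mathcal{E}(r/c)$, each normalized to vanish at $r=0$. Integrating these once more, again by parts (with $v'=1$ on the $\mathcal{E}$-terms), produces $\Phi_{3/2}$ and $\Phi_{1/2}$, each a linear combination of $r^{3/2}e^{-c/r}$, $r^{1/2}e^{-c/r}$, $\mathcal{E}(r/c)$, and the non-elementary term $r\,\mathcal{E}(r/c)$. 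Forming the alternating sum $\Phi_\alpha(t+t')-\Phi_\alpha(t)-\Phi_\alpha(t')+\Phi_\alpha(0)$, the first three types of contribution assemble exactly into the power-times-exponential boundary terms and the single-integral term of the statement, and one is left with the extra combination $\Lambda := (t+t')\,\mathcal{E}(\tfrac{t+t'}{c})-t\,\mathcal{E}(\tfrac{t}{c})-t'\,\mathcal{E}(\tfrac{t'}{c})$.

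The step that closes the argument is the identity, verified by direct differentiation,
\[
\partial_{s_1}\partial_{s_2}\Big[(s_1+s_2)\,\mathcal{E}\big(\tfrac{s_1+s_2}{c}\big)\Big]
=\frac{c^{1/2}}{2}\,(s_1+s_2)^{-3/2}e^{-\frac{c}{s_1+s_2}}+c^{3/2}\,(s_1+s_2)^{-5/2}e^{-\frac{c}{s_1+s_2}} .
\]
Integrating over $[0,t]\times[0,t']$ and using the scaling $\int_0^{t'}\!\int_0^t(s_1+s_2)^{-5/2}e^{-c/(s_1+s_2)}=c^{-1/2}J$, where $J:=\int_0^{t'/c}\!\int_0^{t/c}(s_1+s_2)^{-5/2}e^{-1/(s_1+s_2)}\,\dif s_1\,\dif s_2$ is precisely the double integral in the statement, one gets $\Lambda=\tfrac{c^{1/2}}{2}I_{3/2}+cJ$. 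Substituting this into the expansion of $I_{3/2}$ gives a linear equation for $I_{3/2}$ whose coefficient is not $1$, which solves to the second identity; feeding the resulting closed form for $I_{3/2}$, together with $\Lambda=\tfrac{c^{1/2}}{2}I_{3/2}+cJ$, into the expansion of $I_{1/2}$ yields the first identity after collecting constants.

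The only genuinely non-routine point is this closing identity: without observing that $(s_1+s_2)\,\mathcal{E}(\tfrac{s_1+s_2}{c})$ has such a clean mixed second derivative, the term $r\,\mathcal{E}(r/c)$ cannot be reabsorbed into the $(s_1+s_2)^{-5/2}$ double integral that actually appears in the lemma, and the formulas would not close. Everything else is careful bookkeeping of signs and coefficients across the nested integrations by parts, plus the routine verification that all the integrals involved converge near the origin.
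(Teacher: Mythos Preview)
Your argument is correct and is, at bottom, the same integration-by-parts computation the paper has in mind. However, your organization is more elaborate than necessary. By first seeking a full second antiderivative $\Phi_\alpha$ you produce the cross term $r\,\mathcal{E}(r/c)$, and then need the auxiliary identity for $\partial_{s_1}\partial_{s_2}\big[(s_1+s_2)\mathcal{E}((s_1+s_2)/c)\big]$ to trade $\Lambda$ for $I_{3/2}$ and $J$. If instead you integrate by parts once in $s_1$ (using $\partial_{s_1}[-2(s_1+s_2)^{-1/2}e^{-c/(s_1+s_2)}]$ for $\alpha=3/2$, respectively $\partial_{s_1}[2(s_1+s_2)^{1/2}e^{-c/(s_1+s_2)}]$ for $\alpha=1/2$) and then integrate the resulting one-variable boundary terms in $s_2$, the $(s_1+s_2)^{-5/2}$ double integral appears directly and no $\Lambda$ ever arises; the second identity drops out immediately, and the first follows from it via $I_{1/2}=\tfrac43 A-\tfrac83 cB+\tfrac83 c^{3/2}C-2cI_{3/2}$ in your notation. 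So the step you flag as ``genuinely non-routine'' is an artifact of the symmetric antiderivative route; the paper's one-line proof is justified by the sequential route.
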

\begin{proof}
Both identities follow directly after integration by parts. 

\end{proof}

As already mentioned, the proof of asymptotic normality of $\cQ_{N,M}$ and $\wt\cQ_{N,M}$
relies on Stein's method and Malliavin calculus. For brevity, we list here only two relevant results from this topic, and refer the reader to \cite{Nualart2006,NourdinPeccati2012} for details. Alternatively, the reader may see \cite[Appendix~B]{CialencoDelgado-VencesKim2019} for a comprehensive snapshot of elements of Malliavin calculus used in the current paper. 
 
\begin{theorem}\label{MSthm} \cite[Theorem 5.2.6]{NourdinPeccati2012}
	Let $\mathcal{U}$ be the canonical Hilbert space associated to a Gaussian process.
	Let $I_q(f)$ be a multiple integral of order $q\geq 2$ with respect to the Gaussian process. Assume that 
	$\mathbb{E}\left[I_q(f)^2\right]=\sigma^2$. Then,
	$$d_{TV}\left(I_q(f),\mathcal{N}\left(0,\sigma^2\right)
	\right)\leq \frac{2}{\sigma^2}\left(\mbox{Var}\left[\frac{1}{q}\|\mathcal{D}I_q(f)\|^2_{\mathcal{U}}
	\right]
	\right)^{1/2},$$
	where $d_{TV}$ is the total variation distance and $\mathcal{N}\left(0,\sigma^2\right)$ is a Gaussian random variable with mean 0 and variance $\sigma^2$.
\end{theorem}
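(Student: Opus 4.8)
The plan is to run the classical Malliavin--Stein argument, specialized to a single Wiener chaos of order $q\geq 2$. The starting point is Stein's characterization of the Gaussian law: for $Z\sim\mathcal{N}(0,\sigma^2)$ and any Borel set $A$, the Stein equation $\sigma^2 g'(x)-x g(x)=\1_A(x)-\mathbb{P}(Z\in A)$ has a solution $g=g_A$ which is bounded and Lipschitz with $\|g_A'\|_\infty\le 2/\sigma^2$. Writing $F:=I_q(f)$, this gives the representation
\[
d_{TV}\bigl(F,\mathcal{N}(0,\sigma^2)\bigr)=\sup_{A}\bigl|\mathbb{E}[\sigma^2 g_A'(F)-F g_A(F)]\bigr|,
\]
so it suffices to bound $\mathbb{E}[\sigma^2 g'(F)-F g(F)]$ uniformly over the admissible test functions $g$.

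Next I would bring in the Malliavin calculus. Since $F$ lies in the $q$-th Wiener chaos, the Ornstein--Uhlenbeck generator acts as multiplication by $-q$, so $-L^{-1}F=\tfrac1q F$ and $F=\delta(-\mathcal{D}L^{-1}F)=\tfrac1q\,\delta(\mathcal{D}F)$, where $\delta$ is the divergence operator adjoint to $\mathcal{D}$. Applying the duality (integration-by-parts) formula together with the chain rule $\mathcal{D}\bigl(g(F)\bigr)=g'(F)\mathcal{D}F$, one gets, for smooth $g$,
\[
\mathbb{E}[F g(F)]=\tfrac1q\,\mathbb{E}\bigl[\delta(\mathcal{D}F)\,g(F)\bigr]=\tfrac1q\,\mathbb{E}\bigl[\langle \mathcal{D}F,\mathcal{D}(g(F))\rangle_{\mathcal{U}}\bigr]=\mathbb{E}\Bigl[g'(F)\,\tfrac1q\|\mathcal{D}F\|_{\mathcal{U}}^2\Bigr],
\]
and hence
\[
\mathbb{E}[\sigma^2 g'(F)-F g(F)]=\mathbb{E}\Bigl[g'(F)\Bigl(\sigma^2-\tfrac1q\|\mathcal{D}F\|_{\mathcal{U}}^2\Bigr)\Bigr].
\]

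To conclude, I would bound the right-hand side using $\|g'\|_\infty\le 2/\sigma^2$ and the Cauchy--Schwarz inequality in $L^2(\Omega)$:
\[
\bigl|\mathbb{E}[\sigma^2 g'(F)-F g(F)]\bigr|\le\frac{2}{\sigma^2}\,\mathbb{E}\Bigl|\sigma^2-\tfrac1q\|\mathcal{D}F\|_{\mathcal{U}}^2\Bigr|\le\frac{2}{\sigma^2}\Bigl(\mathbb{E}\bigl[\sigma^2-\tfrac1q\|\mathcal{D}F\|_{\mathcal{U}}^2\bigr]^2\Bigr)^{1/2}.
\]
Finally one uses that $\mathbb{E}\bigl[\tfrac1q\|\mathcal{D}F\|_{\mathcal{U}}^2\bigr]=\tfrac1q\mathbb{E}[F\,\delta(\mathcal{D}F)]=\mathbb{E}[F^2]=\sigma^2$, so the last expectation is exactly $\Var\bigl[\tfrac1q\|\mathcal{D}F\|_{\mathcal{U}}^2\bigr]$, which delivers the stated estimate.

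The hard part is not the algebra above but justifying it: the Stein solution $g_A$ is only Lipschitz, so the chain rule and the duality formula are not immediately applicable to $g_A(F)$. I would handle this by a standard mollification argument, approximating $\1_A$ by smooth functions, carrying out the computation for the regularized Stein solutions, and passing to the limit; this requires knowing that a nonzero element of a fixed chaos of order $q\ge2$ has an absolutely continuous law (so that $\mathbb{P}(F\in\partial A)=0$) and that all relevant moments are finite, which follows from hypercontractivity on Wiener chaos together with $F\in\mathbb{D}^{1,2}$ and $\mathcal{D}F\in\mathrm{Dom}\,\delta$, both automatic here. (Alternatively, this is precisely \cite[Theorem~5.2.6]{NourdinPeccati2012}, and one may simply invoke it.)
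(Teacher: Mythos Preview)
Your argument is correct and is precisely the standard Malliavin--Stein proof one finds in \cite{NourdinPeccati2012}. Note, however, that the paper does not prove this statement at all: it is listed among the auxiliary results and simply cited from \cite[Theorem~5.2.6]{NourdinPeccati2012}, so there is no ``paper's own proof'' to compare against. Your sketch (Stein equation with the bound $\|g'\|_\infty\le 2/\sigma^2$, the identity $-L^{-1}F=\tfrac1q F$ on the $q$-th chaos, integration by parts, and the observation $\mathbb{E}\bigl[\tfrac1q\|\mathcal{D}F\|_{\mathcal{U}}^2\bigr]=\sigma^2$) is exactly the route taken in the cited reference, including the mollification/approximation step you flag at the end.
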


\begin{theorem}\label{MSthm2} \cite[Corollary 5.2.8]{NourdinPeccati2012}
	Let $F_N=I_q\left(f_N\right)$, $N\geq 1$, be a sequence of multiple integrals of order $q\geq 2$ with respect to a Gaussian process.
	Assume that $\mathbb{E}\left[F_N^2\right]\rightarrow \sigma^2>0$, as $N\rightarrow\infty$.
	Then, the following assertions are equivalent:
	\begin{enumerate}
		\item $\displaystyle\w\lim_{N\to \infty}F_N=\mathcal{N}\left(0,\sigma^2\right)$;
		\item $\displaystyle\lim_{N\to \infty} d_{TV}\left(F_N,\mathcal{N}\left(0,\sigma^2\right)\right)=0$.
	\end{enumerate}
\end{theorem}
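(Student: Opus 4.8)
The plan is to run the proof of Theorem~\ref{th:est-ux-t} essentially verbatim, substituting each ingredient by its tilde-counterpart. Weak consistency of both estimators is already supplied by Corollary~\ref{corr:weakConsist}, so only the two asymptotic-normality statements remain; here Proposition~\ref{prop:AsympDu} will play the role of Proposition~\ref{prop:uxasymp}, and Theorem~\ref{th:tildeQE-CLT1} the role of Theorem~\ref{th:CLT}. I would establish asymptotic normality of $\widetilde\sigma^2_{N,M}/\mu$ first, and then deduce the statement for $\mu\widetilde\theta^2_{N,M}$ by the Delta method, exactly as in the already proved case.

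First I would rewrite the renormalized quadratic variation $\widetilde{\mathcal{Q}}_{N,M}=\sqrt{M/(2N)}\,\widetilde Q_{N,M}$ by inserting and subtracting the common limit $\mu\sigma^2(B-A)/\theta^2$ of $N\,\mathbb{E}\widetilde U(t_j)$ (which, by Proposition~\ref{prop:AsympDu}, is asymptotically independent of $j$). Using $\sum_{j=1}^M\sum_{i=2}^{N-1}\widetilde U(t_j,x_i)=M(B-A)\widetilde\sigma^2_{N,M}/\theta^2$, this splits $\widetilde{\mathcal{Q}}_{N,M}$ into a leading term that is almost surely asymptotically equal to $\sqrt{NM/(2\sigma^4)}\,(\widetilde\sigma^2_{N,M}/\mu-\sigma^2)$ and the deterministic bias
\[
\frac{\sqrt{N}\,\theta^2}{\sqrt{2M}\,\mu\sigma^2(B-A)}\sum_{j=1}^M\left[\frac{\mu\sigma^2(B-A)}{\theta^2}-N\,\mathbb{E}\widetilde U(t_j)\right].
\]
The replacement of $\mathbb{E}\widetilde U(t_j)$ by its equivalent, and the harmless discrepancy between the $N-2$ summation indices $i=2,\dots,N-1$ and $N$, are absorbed into the $\overset{a.s.}{\simeq}$ relations, just as in Theorem~\ref{th:est-ux-t}.

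The remaining steps then assemble mechanically. Theorem~\ref{th:tildeQE-CLT1} gives $d_{TV}(\widetilde{\mathcal{Q}}_{N,M},\mathcal{N}(0,1))\leq C/N\to 0$ for fixed $M$, which, since $\mathbb{E}\widetilde{\mathcal{Q}}^2_{N,M}\to 1$ by \eqref{eq:LimTildeQND2}, yields $\w\lim_{N\to\infty}\widetilde{\mathcal{Q}}_{N,M}=\mathcal{N}(0,1)$ (the total-variation bound already forces convergence in law; equivalently one invokes Theorem~\ref{MSthm2} on the second-chaos sequence). Once the bias above is shown to vanish, Slutsky's theorem gives $\w\lim_{N\to\infty}\sqrt{NM}(\widetilde\sigma^2_{N,M}/\mu-\sigma^2)=\mathcal{N}(0,2\sigma^4)$. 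Finally, the algebraic identity $\widetilde\theta^2_{N,M}\widetilde\sigma^2_{N,M}=\sigma^2\theta^2$ gives $\mu\widetilde\theta^2_{N,M}=\theta^2\sigma^2/(\widetilde\sigma^2_{N,M}/\mu)$, so the Delta method applied to $g(x)=\theta^2\sigma^2/x$ at $x=\sigma^2$, where $g'(\sigma^2)=-\theta^2/\sigma^2$, converts this into $\w\lim_{N\to\infty}\sqrt{NM}(\mu\widetilde\theta^2_{N,M}-\theta^2)=\mathcal{N}(0,2\theta^4)$.

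The one genuinely nonroutine point — and the step I expect to be the main obstacle — is that the bias term above actually tends to zero. This requires more than the bare limit in Proposition~\ref{prop:AsympDu}: one needs the rate $\sqrt{N}\,[\mu\sigma^2(B-A)/\theta^2-N\,\mathbb{E}\widetilde U(t_j)]\to 0$, i.e. the error in Proposition~\ref{prop:AsympDu} must be $o(N^{-3/2})$, which is exactly the analogue of the $N^{\alpha}$, $\alpha<1$, sharpening already established in Proposition~\ref{prop:uxasymp}. I would obtain it by retaining the next-order terms in the expansion of $B_2=\widetilde A_1+\widetilde A_2+\widetilde A_3$ from the proof of Proposition~\ref{prop:AsympDu}, rather than only its leading part. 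For $\gamma>1$ the subleading contributions are $O(N^{-2})$ and the estimate is immediate; the delicate regime is $\gamma=1$, where the two scales $N^{-\gamma}$ and $N^{-(3-2\gamma)}$ coincide and one must verify that the cancellations producing the constant $\mu$ leave no surviving term of order $N^{-3/2}$.
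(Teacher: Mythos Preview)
Your proposal does not address the stated theorem at all. Theorem~\ref{MSthm2} is the Nourdin--Peccati equivalence between weak convergence and total-variation convergence for a sequence of fixed-order multiple Wiener--It\^o integrals; the paper does not prove it but simply cites it as \cite[Corollary~5.2.8]{NourdinPeccati2012} in the auxiliary results. What you have written is instead a (reasonable) sketch of the proof of Theorem~\ref{th:est-Du}, the asymptotic normality of $\mu\widetilde\theta^2_{N,M}$ and $\widetilde\sigma^2_{N,M}/\mu$. The two statements are entirely different: one is a general abstract result from Malliavin--Stein theory, the other is a specific application in this paper that \emph{uses} Theorem~\ref{MSthm2} as a black box.

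If your intention was in fact to prove Theorem~\ref{th:est-Du}, then your outline matches the paper's approach closely: the paper explicitly says that with the technical results of Section~\ref{sec:discrtSolU} in hand, the proof follows by the same arguments as Theorem~\ref{th:est-ux-t}, and it skips the details. Your identification of the one nonroutine point --- that one needs the rate $\sqrt{N}\bigl[\mu\sigma^2(B-A)/\theta^2 - N\,\mathbb{E}\widetilde U(t_j)\bigr]\to 0$, which is a sharpening of Proposition~\ref{prop:AsympDu} analogous to the $N^\alpha$, $\alpha<1$, statement in Proposition~\ref{prop:uxasymp} --- is correct and is indeed implicit in the paper's ``by analogy'' remark. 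But none of this constitutes a proof of Theorem~\ref{MSthm2}.
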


%

\def\cprime{$'$}

\end{document}